
%

\documentclass{amsart}

\newtheorem{theorem}{Theorem}[section]
\newtheorem{lemma}[theorem]{Lemma}

\theoremstyle{definition}

\theoremstyle{remark}
\newtheorem{remark}[theorem]{Remark}

\numberwithin{equation}{section}

\begin{document}

\title{Jordan $\{g, h\}$-derivations on algebra of matrices}


\author{Arindam Ghosh}
\address{Department of Mathematics, Indian Institute of Technology Patna, Patna-801106}
\curraddr{}
\email{E-mail: arindam.pma14@iitp.ac.in}
\thanks{}

\author{Om Prakash$^{\star}$}
\address{Department of Mathematics, Indian Institute of Technology Patna, Patna-801106}
\curraddr{}
\email{om@iitp.ac.in}
\thanks{* Corresponding author}

\subjclass[2010]{16W10, 16W25, 47L35}

\keywords{Derivation; Jordan  derivation; $\{g, h\}$-Derivation; Jordan $\{g, h\}$-derivation;  Upper Triangular Matrix Algebra; Matrix Algebra}

\date{}

\dedicatory{}

\begin{abstract}
In this article, we show that every Jordan $\{g, h\}$-derivation over $\mathcal{T}_n(C)$ is a $\{g, h\}$-derivation under an assumption, where $C$ is a commutative ring with unity $1\neq 0$. We give an example of a Jordan $\{g, h\}$-derivation over $\mathcal{T}_n(C)$ which is not a $\{g, h\}$-derivation. Also, we study Jordan $\{g, h\}$-derivation over $\mathcal{M}_n(C)$.
\end{abstract}

\maketitle

\section{Introduction}
 Jordan derivation over prime rings was initiated by Herstein in 1957 \cite{her} and he proved that every Jordan derivation over a prime ring of characteristic not $2$ is a derivation. In 1975, Cusack established the same result for semiprime rings \cite{cus}. Let $A, B$ be two algebras over a commutative ring $C$. A linear map $d:A \rightarrow B$ is said to be a derivation if $d(xy)=d(x)y+xd(y)$, for all $x,y \in A$ and $d$ is said to be a Jordan derivation if $d(x^2)=d(x)x+xd(x)$ for all $x \in A$. When $C$ is 2-torsion free, an equivalent definition of Jordan derivation is that a linear map $d:A\rightarrow B$ is said to be a Jordan derivation if $d(x\circ y)=d(x)\circ y+x\circ d(y)$ for all $x,y \in A$. Throughout the paper, we assume $C$ is $2$-torsion free with unity $1\neq 0$. A ring $R$ is $2$-torsion free if $2a=0$ for some $a\in R$ implies $a=0$. Jordan derivation over rings and algebras are studied by many authors \cite{ben,bre,bresa,gho,sin,zha,zhan}, where they found every Jordan derivation over undertaken rings or algebras is a derivation. After that some generalizations of (Jordan) derivations have been introduced like (Jordan) left derivation, (Jordan) generalized derivation, (Jordan) $P$-derivation. Many results had been proved on those derivations over some rings and algebras \cite{bres,ghos,li,liwan,ma}.

Recently, in 2016 Bre\v{s}ar \cite{Bresar} introduced $\{g, h\}$ derivation and Jordan $\{g, h\}$ derivation. They have considered algebra over a field $F$ with char($F$)$\neq2$. We take the same definition with assuming the algebra over $C$. Let $A$ be a unital algebra over $C$ and $f, g, h : A \rightarrow A$ are linear maps. Then $f$ is said to be a $\{g, h\}$-derivation if
\begin{equation}
\label{gh1.1}
f (xy) = g(x)y + xh(y) = h(x)y + xg(y) ~\text{for all}~ x, y \in A.
\end{equation}
If $f = g = h$ in \eqref{gh1.1}, Then $f$ is a usual derivation. Now, $f$ is said to be a Jordan $\{g, h\}$-derivation if
\begin{equation}
\label{gh1.3}
f (x \circ y) = g(x) \circ y + x \circ h(y)~ \text{for all}~ x, y \in A.
\end{equation}
If $f = g = h$ in \eqref{gh1.3}, then $f$ is a usual Jordan derivation. Every $\{g, h\}$-derivation is Jordan $\{g, h\}$-derivation, but the converse is not true which has been shown by Bre\v{s}ar in example $2.1$\cite{Bresar}.  It is easy to see that, if every Jordan $\{g, h\}$-derivation over $A$ is a $\{g, h\}$-derivation, then every Jordan derivation on $A$ is a derivation. In 2016, Bre\v{s}ar established that every Jordan $\{g, h\}$-derivation of a semiprime algebra $A$ over a field $\mathbb{F}$ with char($\mathbb{F}$)$\neq 2$, is a $\{g, h\}$-
derivation \cite{Bresar}. \\
In this paper, we prove that every Jordan $\{g, h\}$-derivation over upper triangular matrix algebras $\mathcal{T}_n(C)$ is a $\{g, h\}$-derivation under some assumption. Next, we establish that $\{g, h\}$-derivation is the only Jordan $\{g, h\}$-derivation on full matrix algebras $\mathcal{M}_n(C)$. It is proved without taking any additional assumption as taken for $\mathcal{T}_n(C)$.

\section{Jordan \{g,h\} derivation on $\mathcal{T}_n(C)$}

Now, we characterize Jordan  $\{g, h\}$-derivation over upper triangular matrix algebras. $\mathcal{T}_n(C)$ represents the algebra of $n\times n$ upper triangular matrices over $C$ and $e_{ij}$ is the matrix unit whose $(i,j)$-th entry is $1$, $0$ elsewhere.

\begin{theorem}
\label{thm1}
Let $f$ be a Jordan $\{g, h\}$-derivation on $\mathcal{T}_n(C)$, $n\geq 2$, with  $f(e_{ii})=g(e_{ii})e_{ii}+e_{ii}h(e_{ii})$, for all $i=1,2,\dots,n$. Then $f$ is a $\{g, h\}$-derivation.
\end{theorem}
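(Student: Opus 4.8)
The plan is to verify the two defining identities of a $\{g,h\}$-derivation,
$$f(ab) = g(a)b + ah(b) \quad\text{and}\quad f(ab) = h(a)b + ag(b),$$
directly on the matrix-unit basis $\{e_{ij} : i \le j\}$ of $\mathcal{T}_n(C)$. Since $f, g, h$ are $C$-linear and both sides of each identity are bilinear in $(a,b)$, agreement on all pairs of basis elements forces agreement everywhere. Because $e_{ij}e_{kl} = \delta_{jk}e_{il}$, these identities split into two regimes: the \emph{vanishing} identities $g(e_{ij})e_{kl} + e_{ij}h(e_{kl}) = 0$ (and its $g \leftrightarrow h$ twin) whenever $j \ne k$, and the \emph{product} identities $f(e_{il}) = g(e_{ij})e_{jl} + e_{ij}h(e_{jl})$ (and twin) whenever $j = k$. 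The entire argument amounts to extracting from \eqref{gh1.3} enough information about $g$ and $h$ on the basis to confirm all of these.

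The first step exploits the hypothesis on the diagonal idempotents. Setting $x = y = e_{ii}$ in \eqref{gh1.3} and using $e_{ii}\circ e_{ii} = 2e_{ii}$ gives $2f(e_{ii}) = g(e_{ii})\circ e_{ii} + e_{ii}\circ h(e_{ii})$; combining this with the assumption $f(e_{ii}) = g(e_{ii})e_{ii} + e_{ii}h(e_{ii})$ and $2$-torsion-freeness forces $g(e_{ii})e_{ii} + e_{ii}h(e_{ii}) = e_{ii}g(e_{ii}) + h(e_{ii})e_{ii}$, i.e. \emph{both} product forms $f(e_{ii}) = g(e_{ii})e_{ii}+e_{ii}h(e_{ii}) = h(e_{ii})e_{ii}+e_{ii}g(e_{ii})$ hold. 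Thus the hypothesis is precisely the statement that the $\{g,h\}$-derivation law already holds on the products $e_{ii}\cdot e_{ii}$, and this is the seed from which everything must propagate. Next I would feed the orthogonality relations $e_{ii}\circ e_{jj} = 0$ ($i \ne j$) into \eqref{gh1.3} to obtain $g(e_{ii})\circ e_{jj} + e_{ii}\circ h(e_{jj}) = 0$, and by Peirce-decomposing $g(e_{ii})$ and $h(e_{ii})$ against the orthogonal system $\{e_{pp}\}$, pin down their diagonal and off-diagonal components. The aim here is to show the off-diagonal parts are constrained just enough to make the later cross-terms cancel.

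I would then pass to the strictly upper-triangular units. For $i < j$, the relations $e_{ii}\circ e_{ij} = e_{ij} = e_{jj}\circ e_{ij}$ turn \eqref{gh1.3} into two expressions for $f(e_{ij})$ in terms of $g, h$ on $e_{ii}, e_{jj}, e_{ij}$, while $e_{kk}\circ e_{ij} = 0$ for $k \notin \{i,j\}$ yields further constraints on $g(e_{ij})$ and $h(e_{ij})$. With the shape of $g, h$ on every $e_{ij}$ in hand, the product identities are reached through $e_{ij}\circ e_{jl} = e_{il}$ (valid because $e_{jl}e_{ij} = 0$ when $i < j < l$): now \eqref{gh1.3} reads $f(e_{il}) = g(e_{ij})\circ e_{jl} + e_{ij}\circ h(e_{jl})$, and the structural information should permit dropping the unwanted summands to recover exactly $g(e_{ij})e_{jl} + e_{ij}h(e_{jl})$. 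The $g \leftrightarrow h$ twin follows symmetrically, and the vanishing identities are handled the same way starting from $e_{ij}\circ e_{kl} = 0$ when $j \ne k$; the boundary cases $i = j$ and $j = l$ reduce to the idempotent-times-unit computations already performed.

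The main obstacle I anticipate is the bookkeeping in the idempotent step: a priori $g$ and $h$ are arbitrary linear maps, so nothing guarantees that $g(e_{ii})$ or $h(e_{ii})$ respects the triangular structure, and all control over their off-diagonal entries must be squeezed out of the orthogonality relations together with the diagonal hypothesis. This is exactly where the extra assumption on $f(e_{ii})$ is indispensable — without it the obstruction survives, which is consistent with the paper's example of a Jordan $\{g,h\}$-derivation that is not a $\{g,h\}$-derivation. Once the correct form of $g$ and $h$ on the idempotents is established, propagating it to the $e_{ij}$ and then to products is comparatively mechanical, though it still requires carefully tracking which Peirce components of $g(e_{ij})$ and $h(e_{ij})$ are permitted to be nonzero.
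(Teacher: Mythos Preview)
Your proposal is correct and follows essentially the same route as the paper: both reduce by bilinearity to pairs of matrix units, use the hypothesis on the $e_{ii}$ together with the Jordan relations $e_{ii}\circ e_{jj}=0$, $e_{ii}\circ e_{ij}=e_{ij}=e_{jj}\circ e_{ij}$, $e_{kk}\circ e_{ij}=0$, and $e_{ij}\circ e_{jl}=e_{il}$ to constrain $g$ and $h$, and then verify the $\{g,h\}$-derivation identities case by case. The paper executes this via explicit coefficient expansions $g(e_{ij})=\sum g^{(ij)}_{mp}e_{mp}$ and entry-by-entry comparison (its Lemmas~\ref{lem3}--\ref{lem5}), whereas your Peirce-decomposition language describes the same computations more conceptually; one minor slip is that $g(e_{ii}),h(e_{ii})$ are automatically upper triangular since $g,h$ map into $\mathcal{T}_n(C)$, and note that $e_{ij}\circ e_{kl}=0$ requires both $j\ne k$ and $l\ne i$, so the case $l=i$ must be handled via the swap $f(ab)\leftrightarrow f(ba)$ (the paper's Lemma~\ref{pro2}) rather than directly.
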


Before proving the theorem, we prove several lemmas. Let $f:A\rightarrow A$ be a Jordan $\{g, h\}$-derivation.

\begin{lemma}
\label{pro1}
Let $a\in A$ such that $f(a^2)=g(a)a+ah(a)$. Then $f(a^2)=h(a)a+ag(a)$.
\end{lemma}

\begin{proof}
From \eqref{gh1.3}, $f (a \circ a) = h(a) \circ a + a \circ g(a)$. Therefore, $f(a^2)=f (a \circ a)-f(a^2)=h(a)a+ag(a)$.
\end{proof}

\begin{lemma}
\label{pro2}
Let $a, b\in A$ such that $f(ab)=g(a)b+ah(b)=h(a)b+ag(b)$. Then $f(ba)=g(b)a+bh(a)=h(b)a+bg(a)$.
\end{lemma}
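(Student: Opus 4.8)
The plan is to apply the Jordan $\{g,h\}$-identity \eqref{gh1.3} to the anticommutator $a \circ b = ab + ba$ and then to isolate $f(ba)$ using the two given expressions for $f(ab)$. The key preliminary observation, already exploited in the proof of Lemma \ref{pro1}, is that \eqref{gh1.3} in fact carries two equivalent forms: interchanging $x$ and $y$ and using the commutativity of the Jordan product $\circ$ yields
\begin{equation*}
f(x \circ y) = g(x) \circ y + x \circ h(y) = h(x) \circ y + x \circ g(y)
\end{equation*}
for all $x, y \in A$. Each of these two forms will be paired with one of the two hypotheses on $f(ab)$.

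First I would take the form $f(a \circ b) = g(a) \circ b + a \circ h(b)$ and expand both sides, writing $f(a \circ b) = f(ab) + f(ba)$ on the left and $g(a)b + bg(a) + ah(b) + h(b)a$ on the right. Substituting the hypothesis $f(ab) = g(a)b + ah(b)$ and cancelling the common terms leaves $f(ba) = h(b)a + bg(a)$, which is one of the two asserted identities.

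Next I would repeat the computation with the companion form $f(a \circ b) = h(a) \circ b + a \circ g(b)$. Expanding gives $f(ab) + f(ba) = h(a)b + bh(a) + ag(b) + g(b)a$, and substituting the second hypothesis $f(ab) = h(a)b + ag(b)$ and cancelling yields $f(ba) = g(b)a + bh(a)$. Combining the two conclusions gives $f(ba) = g(b)a + bh(a) = h(b)a + bg(a)$, as required.

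The argument is essentially a bookkeeping exercise, so I do not expect a genuine obstacle; the only point requiring care is to match each form of the Jordan identity with the corresponding form of the hypothesis and to track the four cross terms so that exactly the right pair cancels. Recognizing that \eqref{gh1.3} admits the second symmetric form is the one conceptual step, and everything else then follows by direct substitution and linearity.
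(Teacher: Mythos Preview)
Your proposal is correct and matches the paper's argument essentially line for line: the paper also computes $f(ba)=f(a\circ b)-f(ab)$ twice, once using \eqref{gh1.3} with $(x,y)=(a,b)$ paired with the hypothesis $f(ab)=g(a)b+ah(b)$, and once with $(x,y)=(b,a)$ paired with $f(ab)=h(a)b+ag(b)$. Your ``second symmetric form'' of \eqref{gh1.3} is exactly the original identity applied with the variables interchanged, so the two presentations are the same computation in slightly different dress.
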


\begin{proof}
From \eqref{gh1.3}, $f (x \circ y) = g(x) \circ y + x \circ h(y)$, for all $x, y\in A$. Therefore, $f(ba)=f (b \circ a)-f(ab)=g(b) \circ a + b \circ h(a)-h(a)b-ag(b)=g(b)a+bh(a)$. Similarly, $f(ba)=f (a \circ b)-f(ab)=g(a) \circ b + a \circ h(b) - g(a)b+ah(b) =h(b)a+bg(a)$.
\end{proof}

Now, suppose $f:\mathcal{T}_n(C)\rightarrow \mathcal{T}_n(C)$ is a Jordan $\{g, h\}$-derivation.

First we prove
\begin{equation}
\label{gh2}
f (e_{ij}e_{kl}) = g(e_{ij})e_{kl} + e_{ij}h(e_{kl}) = h(e_{ij})e_{kl} + e_{ij}g(e_{kl}),
\end{equation}
which is equivalent to
\begin{equation}
\label{gh3}
f (e_{kl}e_{ij}) = g(e_{kl})e_{ij} + e_{kl}h(e_{ij}) = h(e_{kl})e_{ij} + e_{kl}g(e_{ij})~\text{(by Lemma \ref{pro2})}.
\end{equation}

Now, let
\begin{equation}
\label{gh4}
g(e_{ij})=\sum\limits_{1 \leq m \leq p \leq n}g_{mp}^{(ij)}e_{mp}~, ~\text{where} ~g_{mp}^{(ij)} \in C
\end{equation}

and \begin{equation}
\label{gh5}
h(e_{ij})=\sum\limits_{1 \leq m \leq p \leq n}h_{mp}^{(ij)}e_{mp}~, ~\text{where} ~h_{mp}^{(ij)} \in C.
\end{equation}

\begin{lemma}
\label{lem3}
$f (e_{ii}e_{jj}) =g(e_{ii}) e_{jj} + e_{ii} h(e_{jj})=h(e_{ii}) e_{jj} + e_{ii} g(e_{jj})$, for $i\neq j$.
\end{lemma}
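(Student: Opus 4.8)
The plan is to exploit that $e_{ii}e_{jj}=0$ whenever $i\neq j$, so that $f(e_{ii}e_{jj})=f(0)=0$; the real content of the lemma is therefore that each of the two expressions $g(e_{ii})e_{jj}+e_{ii}h(e_{jj})$ and $h(e_{ii})e_{jj}+e_{ii}g(e_{jj})$ is itself zero. To produce the two relations I need, I would apply the defining identity \eqref{gh1.3} once to the pair $(e_{ii},e_{jj})$ and once, separately, to the pair $(e_{jj},e_{ii})$. Since $e_{ii}\circ e_{jj}=e_{jj}\circ e_{ii}=0$, both left-hand sides vanish, and after expanding the Jordan products I obtain
\begin{equation*}
g(e_{ii})e_{jj}+e_{jj}g(e_{ii})+e_{ii}h(e_{jj})+h(e_{jj})e_{ii}=0 \tag{$*$}
\end{equation*}
and
\begin{equation*}
h(e_{ii})e_{jj}+e_{ii}g(e_{jj})+e_{jj}h(e_{ii})+g(e_{jj})e_{ii}=0. \tag{$**$}
\end{equation*}

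Next I would read off the matrix-unit components of the four summands in $(*)$ using \eqref{gh4} and \eqref{gh5}. The \emph{target} term $g(e_{ii})e_{jj}$ is supported in column $j$ (rows $\le j$) and $e_{ii}h(e_{jj})$ in row $i$ (columns $\ge i$); together they form the target pair $g(e_{ii})e_{jj}+e_{ii}h(e_{jj})$. The \emph{complementary} pair $e_{jj}g(e_{ii})+h(e_{jj})e_{ii}$ is supported in row $j$ and column $i$. Comparing these four supports shows that the target pair and the complementary pair overlap only at the two diagonal slots $(i,i)$ and $(j,j)$, and at each of those the relevant coefficient ($g^{(ii)}_{jj}$ at $(j,j)$, and $h^{(jj)}_{ii}$ at $(i,i)$) is contributed once by each pair and hence appears doubled in $(*)$. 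Invoking that $C$ is $2$-torsion-free kills those doubled diagonal coefficients, while disjointness of the supports at every other position forces the two pairs to vanish separately. In particular the target pair $g(e_{ii})e_{jj}+e_{ii}h(e_{jj})=0$, which is the first claimed equality.

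An identical support analysis applied to $(**)$ — now with target pair $h(e_{ii})e_{jj}+e_{ii}g(e_{jj})$ and complementary pair $e_{jj}h(e_{ii})+g(e_{jj})e_{ii}$, overlapping again only at $(i,i)$ and $(j,j)$ — yields the second equality $h(e_{ii})e_{jj}+e_{ii}g(e_{jj})=0$. Both expressions then agree with $f(e_{ii}e_{jj})=0$, completing the proof. The one genuine subtlety — the main obstacle — is recognizing that a single application of \eqref{gh1.3} only constrains the values $g(e_{ii})$ and $h(e_{jj})$ and so can deliver only the first equality; the second equality involves the other two values $h(e_{ii})$ and $g(e_{jj})$ and therefore forces the second application with the arguments reversed. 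Everything else is the bookkeeping of matrix-unit positions, where the only delicate point is the $2$-torsion cancellation at the coincident diagonal entries $(i,i)$ and $(j,j)$; this is also precisely the step that would fail without the $2$-torsion-free hypothesis on $C$.
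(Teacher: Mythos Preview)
Your proposal is correct and follows essentially the same approach as the paper: both arguments apply the Jordan identity \eqref{gh1.3} to the ordered pairs $(e_{ii},e_{jj})$ and $(e_{jj},e_{ii})$ to obtain your $(*)$ and $(**)$ (the paper's \eqref{gh6} and \eqref{gh6a}), then split each resulting equation into the ``target'' and ``complementary'' halves by examining matrix-unit positions, using $2$-torsion freeness at the diagonal slots $(i,i)$ and $(j,j)$. The only difference is packaging: the paper writes out the coefficient equalities \eqref{gh1a}--\eqref{gh1d} explicitly (after assuming $i<j$), whereas you phrase the same computation as a support-disjointness argument; the underlying identifications and the use of the $2$-torsion hypothesis are identical.
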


\begin{proof}
Let $i\neq j$. Without loss of generality, let $i<j$. Since $f$ is a Jordan $\{g, h\}$-derivation on $\mathcal{T}_n(C)$,
\begin{equation}
\label{gh6}
\begin{aligned}
0&=f (e_{ii}\circ e_{jj}) = g(e_{ii})\circ e_{jj} + e_{ii}\circ h(e_{jj})\\
&=g_{1j}^{(ii)}e_{1j}+g_{2j}^{(ii)}e_{2j}+\dots+g_{jj}^{(ii)}e_{jj}+g_{jj}^{(ii)}e_{jj}+g_{j,j+1}^{(ii)}e_{j,j+1}+\dots+g_{jn}^{(ii)}e_{jn}\\
&+h_{ii}^{(jj)}e_{ii}+h_{i,i+1}^{(jj)}e_{i,i+1}+\dots+h_{in}^{(jj)}e_{in}+h_{1i}^{(jj)}e_{1i}+h_{2i}^{(jj)}e_{2i}+\dots+h_{ii}^{(jj)}e_{ii}.
\end{aligned}
\end{equation}

\begin{equation}
\label{gh6a}
\begin{aligned}
0&=f (e_{ii}\circ e_{jj}) = h(e_{ii})\circ e_{jj} + e_{ii}\circ g(e_{jj})\\
&=h_{1j}^{(ii)}e_{1j}+h_{2j}^{(ii)}e_{2j}+\dots+h_{jj}^{(ii)}e_{jj}+h_{jj}^{(ii)}e_{jj}+h_{j,j+1}^{(ii)}e_{j,j+1}+\dots+h_{jn}^{(ii)}e_{jn}\\
&+g_{ii}^{(jj)}e_{ii}+g_{i,i+1}^{(jj)}e_{i,i+1}+\dots+g_{in}^{(jj)}e_{in}+g_{1i}^{(jj)}e_{1i}+g_{2i}^{(jj)}e_{2i}+\dots+g_{ii}^{(jj)}e_{ii}.
\end{aligned}
\end{equation}

Equating the coefficients of $e_{1j},e_{2j},\dots,e_{i-1,j},e_{i+1,j},\dots,e_{j-1,j}$ respectively from both sides of \eqref{gh6},
\begin{equation}
\tag{1a} \label{gh1a}
\begin{aligned}
g_{1j}^{(ii)}=g_{2j}^{(ii)}=\dots=g_{i-1,j}^{(ii)}=g_{i+1,j}^{(ii)}=\dots=g_{j-1,j}^{(ii)}=0.
\end{aligned}
\end{equation}

Similarly, from \eqref{gh6},
\begin{equation}
\tag{1b} \label{gh1b}
\begin{aligned}
2g_{jj}^{(ii)}=2h_{ii}^{(jj)}=0 ~\text{(equating the coefficients of} ~e_{jj} ~\text{and} ~e_{ii})\implies g_{jj}^{(ii)}=h_{ii}^{(jj)}=0.
\end{aligned}
\end{equation}

Equating the coefficients of $e_{i,i+1},e_{i,i+2},\dots,e_{i,j-1},e_{i,j+1},\dots,e_{in}$ from \eqref{gh6},
\begin{equation}
\tag{1c} \label{gh1c}
\begin{aligned}
h_{i,i+1}^{(jj)}=h_{i,i+2}^{(jj)}=\dots =h_{i,j-1}^{(jj)} =h_{i,j+1}^{(jj)}=\dots=h_{in}^{(jj)}=0.
\end{aligned}
\end{equation}

Equating the coefficients of $e_{ij}$ from \eqref{gh6},
\begin{equation}
\tag{1d} \label{gh1d}
\begin{aligned}
g_{ij}^{(ii)}+h_{ij}^{(jj)}=0.
\end{aligned}
\end{equation}

Now, $f (e_{ii}e_{jj}) =0$  and
\begin{align*}
& g(e_{ii}) e_{jj} + e_{ii} h(e_{jj})\\
&=g_{1j}^{(ii)}e_{1j}+g_{2j}^{(ii)}e_{2j}+\dots+g_{jj}^{(ii)}e_{jj}+h_{ii}^{(jj)}e_{ii}+h_{i,i+1}^{(jj)}e_{i,i+1}+\dots+h_{in}^{(jj)}e_{in}\\
&=g_{1j}^{(ii)}e_{1j}+g_{2j}^{(ii)}e_{2j}+\dots+g_{i-1,j}^{(ii)}e_{i-1,j}+g_{i+1,j}^{(ii)}e_{i+1,j}+\dots+g_{j-1,j}^{(ii)}e_{j-1,j}\\
&+g_{jj}^{(ii)}e_{jj}+h_{ii}^{(jj)}e_{ii}\\
&+h_{i,i+1}^{(jj)}e_{i,i+1}+h_{i,i+2}^{(jj)}e_{i,i+2}+\dots+h_{i,j-1}^{(jj)}e_{i,j-1}+h_{i,j+1}^{(jj)}e_{i,j+1}+\dots+h_{in}^{(jj)}e_{in}\\
&+(g_{ij}^{(ii)}+h_{ij}^{(jj)})e_{ij}\\
&=0~\text{(by using \eqref{gh1a},\eqref{gh1b},\eqref{gh1c} and \eqref{gh1d})}.
\end{align*}

Similarly, by using \eqref{gh6a}, we get $h(e_{ii}) e_{jj} + e_{ii} g(e_{jj})=0$. Hence we prove that, $f (e_{ii}e_{jj}) =g(e_{ii}) e_{jj} + e_{ii} h(e_{jj})=h(e_{ii}) e_{jj} + e_{ii} g(e_{jj})$.
\end{proof}

\begin{lemma}
\label{lem4}
$f(e_{ii}e_{jk})=g(e_{ii})e_{jk}+e_{ii}h(e_{jk})=h(e_{ii})e_{jk}+e_{ii}g(e_{jk})$, for $j<k$.
\end{lemma}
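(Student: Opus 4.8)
The plan is to follow the coefficient-bookkeeping method already used in Lemma \ref{lem3}, now applied to the products $e_{ii}e_{jk}$ with $j<k$. The first step is to record that $e_{ii}e_{jk}=\delta_{ij}e_{ik}$, while the relevant anticommutator is $e_{ii}\circ e_{jk}=\delta_{ij}e_{ik}+\delta_{ik}e_{ji}$, and since $j<k$ at most one of these two terms can survive. Accordingly I would split into the case $i=j$ (where the product is the nonzero unit $e_{ik}$) and the case $i\neq j$ (where the product vanishes), subdividing the latter according to whether $i=k$ or $i\notin\{j,k\}$, as this controls the surviving term $e_{jk}e_{ii}$ in the anticommutator.

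In each case I would write out both halves of the defining identity \eqref{gh1.3},
\[
f(e_{ii}\circ e_{jk})=g(e_{ii})\circ e_{jk}+e_{ii}\circ h(e_{jk})=h(e_{ii})\circ e_{jk}+e_{ii}\circ g(e_{jk}),
\]
and substitute the expansions \eqref{gh4} and \eqref{gh5} for $g(e_{ii}),h(e_{ii}),g(e_{jk}),h(e_{jk})$. Multiplying these sums by $e_{jk}$ or by $e_{ii}$ on the appropriate side collapses each to a single row or column of matrix units, after which equating the coefficient of each $e_{mp}$ produces a family of scalar relations among the $g^{(ii)}_{mp},h^{(ii)}_{mp},g^{(jk)}_{mp},h^{(jk)}_{mp}$, entirely analogous to \eqref{gh1a}--\eqref{gh1d}.

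The final step is to reassemble the one-sided expressions $g(e_{ii})e_{jk}+e_{ii}h(e_{jk})$ and $h(e_{ii})e_{jk}+e_{ii}g(e_{jk})$ and check, using the extracted relations, that each reduces to $f(e_{ii}e_{jk})$ --- to $0$ when $i\neq j$, and to $f(e_{ik})$ when $i=j$. Since each anticommutator differs from the corresponding one-sided product precisely by the reversed terms $e_{jk}g(e_{ii})+h(e_{jk})e_{ii}$ (and the $g\leftrightarrow h$ analogue), the real content is to verify that these reversed contributions vanish identically. Here I expect to invoke Lemma \ref{lem3}, which already forces the relevant off-diagonal and diagonal coefficients of $g(e_{ii})$ and $h(e_{ii})$ to be zero.

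I anticipate that the decisive obstacle is the case $i=j$, where $e_{ii}e_{jk}=e_{ik}\neq 0$: the left-hand side is then $f(e_{ik})$, for which no independent formula is available, so the entire burden falls on showing that the reversed terms (the surviving entries $g^{(ii)}_{kp}e_{ip}$ and $h^{(ik)}_{mi}e_{mi}$) cancel on the nose. The $g$-type of these is killed directly by Lemma \ref{lem3}, whereas the $h$-type will require comparing the two forms of $f(e_{ik})$ to pin down the corresponding coefficients of $h(e_{ik})$. Keeping the index ranges straight across the subcase $i=k$ of the vanishing-product situation will be the other place where care is needed; once the coefficient relations are in hand, the remaining cases are routine.
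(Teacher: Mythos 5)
Your plan coincides with the paper's proof: the same case split on the position of $i$ relative to $j<k$, the same extraction of coefficient relations from the Jordan identities for $e_{ii}\circ e_{jj}$, $e_{ij}\circ e_{ii}$ and $e_{ij}\circ e_{jj}$, and the same identification of the $i=j$ case as the delicate one, resolved exactly as you predict by comparing two expansions of $f(e_{ik})$ to kill the relevant column coefficients of $h(e_{ik})$. The only cosmetic difference is that the paper invokes Lemma~\ref{pro2} to recast that case as the reversed zero product $f(e_{ik}e_{ii})=0$, which is literally the statement that your ``reversed terms'' vanish.
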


\begin{proof}
Our aim is to prove the following.
\begin{equation}
\label{gh7}
f(e_{ii}e_{jk})=g(e_{ii})e_{jk}+e_{ii}h(e_{jk})=h(e_{ii})e_{jk}+e_{ii}g(e_{jk}), ~\text{where} ~j<k.
\end{equation}

\textbf{Case 1.} Let $i=j$. Then \eqref{gh7} equivalent to $f(e_{ij}e_{ii})=g(e_{ij})e_{ii}+e_{ij}h(e_{ii})=h(e_{ij})e_{ii}+e_{ij}g(e_{ii}), ~\text{where} ~i<j.$

For $i<j$,
\begin{equation}
\label{gh8}
\begin{aligned}
f(e_{ij})&=f(e_{ij}\circ e_{ii})=g(e_{ij})\circ e_{ii}+e_{ij}\circ h(e_{ii})\\
&=g_{1i}^{(ii)}e_{1j}+g_{2i}^{(ii)}e_{2j}+\dots +g_{ii}^{(ii)}e_{ij}+g_{jj}^{(ii)}e_{ij}+g_{j,j+1}^{(ii)}e_{i,j+1}+\dots+g_{jn}^{(ii)}e_{in}\\
&+h_{ii}^{(ij)}e_{ii}+h_{i,i+1}^{(ij)}e_{i,i+1}+\dots+h_{in}^{(ij)}e_{in}+h_{1i}^{(ij)}e_{1i}+h_{2i}^{(ij)}e_{2i}+\dots+h_{ii}^{(ij)}e_{ii},
\end{aligned}
\end{equation}

\begin{equation}
\label{gh9}
\begin{aligned}
f(e_{ij})&=f(e_{ij}\circ e_{ii})=h(e_{ij})\circ e_{ii}+e_{ij}\circ g(e_{ii})\\
&=h_{1i}^{(ii)}e_{1j}+h_{2i}^{(ii)}e_{2j}+\dots +h_{ii}^{(ii)}e_{ij}+h_{jj}^{(ii)}e_{ij}+h_{j,j+1}^{(ii)}e_{i,j+1}+\dots+h_{jn}^{(ii)}e_{in}\\
&+g_{ii}^{(ij)}e_{ii}+g_{i,i+1}^{(ij)}e_{i,i+1}+\dots+g_{in}^{(ij)}e_{in}+g_{1i}^{(ij)}e_{1i}+g_{2i}^{(ij)}e_{2i}+\dots+g_{ii}^{(ij)}e_{ii},
\end{aligned}
\end{equation}

\begin{equation}
\label{gh10}
\begin{aligned}
f(e_{ij})&=f(e_{ij}\circ e_{jj})=g(e_{ij})\circ e_{jj}+e_{ij}\circ h(e_{jj})\\
&=g_{1j}^{(ij)}e_{1j}+g_{2j}^{(ij)}e_{2j}+\dots+g_{jj}^{(ij)}e_{jj}+g_{jj}^{(ij)}e_{jj}+g_{j,j+1}^{(ij)}e_{j,j+1}+\dots+g_{jn}^{(ij)}e_{jn}\\
&+h_{jj}^{(jj)}e_{ij}+h_{j,j+1}^{(jj)}e_{i,j+1}+\dots+h_{jn}^{(jj)}e_{in}+h_{1i}^{(jj)}e_{1j}+h_{2i}^{(jj)}e_{2j}+\dots+h_{ii}^{(jj)}e_{ij},
\end{aligned}
\end{equation}

\begin{equation}
\label{gh11}
\begin{aligned}
f(e_{ij})&=f(e_{ij}\circ e_{jj})=h(e_{ij})\circ e_{jj}+e_{ij}\circ g(e_{jj})\\
&=h_{1j}^{(ij)}e_{1j}+h_{2j}^{(ij)}e_{2j}+\dots+h_{jj}^{(ij)}e_{jj}+h_{jj}^{(ij)}e_{jj}+h_{j,j+1}^{(ij)}e_{j,j+1}+\dots+h_{jn}^{(ij)}e_{jn}\\
&+g_{jj}^{(jj)}e_{ij}+g_{j,j+1}^{(jj)}e_{i,j+1}+\dots+g_{jn}^{(jj)}e_{in}+g_{1i}^{(jj)}e_{1j}+g_{2i}^{(jj)}e_{2j}+\dots+g_{ii}^{(jj)}e_{ij}.
\end{aligned}
\end{equation}

From \eqref{gh9} and \eqref{gh10}, equating the coefficients of $e_{1i},e_{2i},\dots,e_{i,i-1},e_{ii}$ respectively,
\begin{equation}
\tag{3a} \label{gh3a}
g_{1i}^{(ij)}=g_{2i}^{(ij)}=\dots=g_{i,i-1}^{(ij)}=2g_{ii}^{(ij)}=0\implies g_{1i}^{(ij)}=g_{2i}^{(ij)}=\dots=g_{i,i-1}^{(ij)}=g_{ii}^{(ij)}=0.
\end{equation}

From \eqref{gh8} and \eqref{gh11}, equating the coefficients of $e_{1i},e_{2i},\dots,e_{i,i-1},e_{ii}$ respectively,
\begin{equation}
\tag{3b} \label{gh3b}
h_{1i}^{(ij)}=h_{2i}^{(ij)}=\dots=h_{i,i-1}^{(ij)}=2h_{ii}^{(ij)}=0\implies h_{1i}^{(ij)}=h_{2i}^{(ij)}=\dots=h_{i,i-1}^{(ij)}=h_{ii}^{(ij)}=0.
\end{equation}

From \eqref{gh6a}, equating the coefficients of $e_{jj},e_{ii}$ respectively,
\begin{equation}
\tag{2a} \label{gh2a}
2h_{jj}^{(ii)}=2g_{ii}^{(jj)}=0\implies h_{jj}^{(ii)}=g_{ii}^{(jj)}=0.
\end{equation}

From \eqref{gh6a}, equating the coefficients of $e_{j,j+1},e_{j,j+2},\dots,e_{jn}$ respectively,
\begin{equation}
\tag{2b} \label{gh2b}
h_{j,j+1}^{(ii)}=h_{j,j+2}^{(ii)}=\dots=h_{jn}^{(ii)}=0.
\end{equation}

From \eqref{gh6}, equating the coefficients of $e_{j,j+1},e_{j,j+2},\dots,e_{jn}$ respectively,
\begin{equation}
\tag{1e} \label{gh1e}
g_{j,j+1}^{(ii)}=g_{j,j+2}^{(ii)}=\dots=g_{jn}^{(ii)}=0.
\end{equation}

Now,\begin{align*}
&f(e_{ij}e_{ii})=0.\\
&g(e_{ij})e_{ii}+e_{ij}h(e_{ii})\\
&=g_{1i}^{(ij)}e_{1i}+g_{2i}^{(ij)}e_{2i}+\dots+g_{ii}^{(ij)}e_{ii}+h_{jj}^{(ii)}e_{ij}+h_{j,j+1}^{(ii)}e_{i,j+1}+\dots+h_{jn}^{(ii)}e_{in}
=0\\
&~\text{(by \eqref{gh3a},\eqref{gh2a},\eqref{gh2b})}.\\
&h(e_{ij})e_{ii}+e_{ij}g(e_{ii})\\
&=h_{1i}^{(ij)}e_{1i}+h_{2i}^{(ij)}e_{2i}+\dots+h_{ii}^{(ij)}e_{ii}+g_{jj}^{(ii)}e_{ij}+g_{j,j+1}^{(ii)}e_{i,j+1}+\dots+g_{jn}^{(ii)}e_{in}
=0\\
&~\text{(by \eqref{gh3b},\eqref{gh1b},\eqref{gh1e})}.
\end{align*}

\textbf{Case 2.} Claim: $f(e_{ii}e_{jk})=g(e_{ii})e_{jk}+e_{ii}h(e_{jk})=h(e_{ii})e_{jk}+e_{ii}g(e_{jk}), ~\text{where} ~i<j.$

\begin{equation}
\label{gh12}
\begin{aligned}
0&=f(e_{ii}\circ e_{jk})=g(e_{ii})\circ e_{jk}+e_{ii}\circ h(e_{jk})\\
&=g_{1j}^{(ii)}e_{1k}+g_{2j}^{(ii)}e_{2k}+\dots+g_{jj}^{(ii)}e_{jk}+g_{kk}^{(ii)}e_{jk}+g_{k,k+1}^{(ii)}e_{j,k+1}+\dots+g_{kn}^{(ii)}e_{jn}\\
&+h_{ii}^{(jk)}e_{ii}+h_{i,i+1}^{(jk)}e_{i,i+1}+\dots+h_{in}^{(jk)}e_{in}+h_{1i}^{(jk)}e_{1i}+h_{2i}^{(jk)}e_{2i}+\dots+h_{ii}^{(jk)}e_{ii}.
\end{aligned}
\end{equation}

Equating the coefficients of $e_{ik},e_{ii}, e_{i,i+1},\dots,e_{i,k-1},e_{i,k+1},\dots,e_{in}$ respectively from both sides of \eqref{gh12},
\begin{equation}
\label{gh13}
\begin{aligned}
&g_{ij}^{(ii)}+h_{ik}^{(jk)}=0\\
&2h_{ii}^{(jk)}=h_{i,i+1}^{(jk)}=\dots=h_{i,k-1}^{(jk)}=h_{i,k+1}^{(jk)}=\dots=h_{in}^{(jk)}=0\\
&\implies h_{ii}^{(jk)}=h_{i,i+1}^{(jk)}=\dots=h_{i,k-1}^{(jk)}=h_{i,k+1}^{(jk)}=\dots=h_{in}^{(jk)}=0.
\end{aligned}
\end{equation}

\begin{align*}
&f(e_{ii}e_{jk})=0.\\
&g(e_{ii})e_{jk}+e_{ii}h(e_{jk})\\
&=g_{1j}^{(ii)}e_{1k}+g_{2j}^{(ii)}e_{2k}+\dots+g_{jj}^{(ii)}e_{jk}+h_{ii}^{(jk)}e_{ii}+h_{i,i+1}^{(jk)}e_{i,i+1}+\dots+h_{in}^{(jk)}e_{in}\\
&=g_{1j}^{(ii)}e_{1k}+g_{2j}^{(ii)}e_{2k}+\dots+g_{i-1,j}^{(ii)}e_{i-1,k}+g_{i+1,j}^{(ii)}e_{i+1,k}+\dots+g_{j-1,j}^{(ii)}e_{j-1,k}\\
&+g_{jj}^{(ii)}e_{jk}\\
&+(g_{ij}^{(ii)}+h_{ik}^{(jk)})e_{ik}\\
&+h_{ii}^{(jk)}e_{ii}+h_{i,i+1}^{(jk)}e_{i,i+1}+\dots+h_{i,k-1}^{(jk)}e_{i,k-1}+h_{i,k+1}^{(jk)}e_{i,k+1}+\dots+h_{in}^{(jk)}e_{in}\\
&=0~\text{( by \eqref{gh1a},\eqref{gh1b} and \eqref{gh13})}.\\
&~\text{Similarly, using}~ f(e_{ii}\circ e_{jk})=h(e_{ii})\circ e_{jk}+e_{ii}\circ g(e_{jk}),\\
&~\text{we have} ~h(e_{ii})e_{jk}+e_{ii}g(e_{jk})=0.
\end{align*}

\textbf{Case 3.} Claim: \begin{equation}
\label{gh14}
f(e_{ii}e_{jk})=g(e_{ii})e_{jk}+e_{ii}h(e_{jk})=h(e_{ii})e_{jk}+e_{ii}g(e_{jk}), ~\text{where} ~i>j.
\end{equation}

\textbf{Subcase 1.} Let $k=i$. Then \eqref{gh14} is equivalent to $f(e_{jj}e_{ij})=g(e_{jj})e_{ij}+e_{jj}h(e_{ij})=h(e_{jj})e_{ij}+e_{jj}g(e_{ij})$, for $i<j$.

Equating the coefficients of $e_{1i},e_{2i},\dots,e_{i-1,i},e_{ii}$ from \eqref{gh6a},
\begin{equation}
\tag{2c} \label{gh2c}
g_{1i}^{(jj)}=g_{2i}^{(jj)}=\dots=g_{i-1,i}^{(jj)}=2g_{ii}^{(jj)}=0 \implies g_{1i}^{(jj)}=g_{2i}^{(jj)}=\dots=g_{i-1,i}^{(jj)}=g_{ii}^{(jj)}=0.
\end{equation}

Comparing the coefficients of $e_{jj},e_{j,j+1},\dots,e_{jn}$ from \eqref{gh9} and \eqref{gh11},
\begin{equation}
\tag{3c} \label{gh3c}
h_{jj}^{(ij)}=h_{j,j+1}^{(ij)}=\dots=h_{jn}^{(ij)}=0.
\end{equation}

\begin{align*}
&f(e_{jj}e_{ij})=0.\\
&g(e_{jj})e_{ij}+e_{jj}h(e_{ij})\\
&=g_{1i}^{(jj)}e_{1j}+g_{2i}^{(jj)}e_{2j}+\dots+g_{ii}^{(jj)}e_{ij}+h_{jj}^{(ij)}e_{jj}+h_{j,j+1}^{(ij)}e_{j,j+1}+\dots+h_{jn}^{(ij)}e_{jn}\\
&=0~\text{(by \eqref{gh2c} and \eqref{gh3c})}.\\
&\text{In a similar way,}~h(e_{jj})e_{ij}+e_{jj}g(e_{ij})=0.
\end{align*}

\textbf{Subcase 2.} Let $k\neq i$.

\begin{equation}
\label{gh15}
\begin{aligned}
0&=f(e_{ii}\circ e_{jk})\\
&=g(e_{ii})\circ e_{jk}+e_{ii}\circ h(e_{jk})\\
&=g_{1j}^{(ii)}e_{1k}+g_{2j}^{(ii)}e_{2k}+\dots+g_{jj}^{(ii)}e_{jk}+g_{kk}^{(ii)}e_{jk}+g_{k,k+1}^{(ii)}e_{j,k+1}+\dots+g_{kn}^{(ii)}e_{jn}\\
&+h_{ii}^{(jk)}e_{ii}+h_{i,i+1}^{(jk)}e_{i,i+1}+\dots+h_{in}^{(jk)}e_{in}+h_{1i}^{(jk)}e_{1i}+h_{2i}^{(jk)}e_{2i}+\dots+h_{ii}^{(jk)}e_{ii}.
\end{aligned}
\end{equation}

Equating the coefficients of $e_{ii},e_{i,i+1},\dots,e_{in}$ from \eqref{gh15},
\begin{equation}
\label{gh16}
2h_{ii}^{(jk)}=h_{i,i+1}^{(jk)}=\dots=h_{in}^{(jk)}=0\implies h_{ii}^{(jk)}=h_{i,i+1}^{(jk)}=\dots=h_{in}^{(jk)}=0.
\end{equation}

Since \eqref{gh6} is also true for $i>j$, equating the coefficients of $e_{1j},e_{2j},\dots,e_{jj}$ from \eqref{gh6},
\begin{equation}
\tag{1f} \label{gh1f}
g_{1j}^{(ii)}=g_{2j}^{(ii)}=\dots=g_{jj}^{(ii)}=0.
\end{equation}

\begin{align*}
&f(e_{ii} e_{jk})=0.\\
&g(e_{ii}) e_{jk}+e_{ii} h(e_{jk})\\
&=g_{1j}^{(ii)}e_{1k}+g_{2j}^{(ii)}e_{2k}+\dots+g_{jj}^{(ii)}e_{jk}+h_{ii}^{(jk)}e_{ii}+h_{i,i+1}^{(jk)}e_{i,i+1}+\dots+h_{in}^{(jk)}e_{in}=0\\
&~\text{(by \eqref{gh1f} and \eqref{gh16} )}.\\
&~\text{In a similar fashion,} ~h(e_{ii})e_{jk}+e_{ii}g(e_{jk})=0.
\end{align*}
\end{proof}

\begin{lemma}
\label{lem5}
$f(e_{ij}e_{kl})=g(e_{ij})e_{kl}+e_{ij}h(e_{kl})=h(e_{ij})e_{kl}+e_{ij}g(e_{kl})$, for $i<j,~k<l$.
\end{lemma}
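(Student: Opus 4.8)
The plan is to read the associative identity \eqref{gh2} off the Jordan identity \eqref{gh1.3} applied to $e_{ij}\circ e_{kl}$, exactly as was done for the mixed products in Lemmas \ref{lem3} and \ref{lem4}. For $i<j$ and $k<l$ one has $e_{ij}e_{kl}=\delta_{jk}e_{il}$ and $e_{kl}e_{ij}=\delta_{li}e_{kj}$, so $e_{ij}e_{kl}$ is nonzero precisely when $j=k$, giving $e_{ij}e_{jl}=e_{il}$ with $i<j<l$. I would therefore distinguish the \emph{principal case} $j=k$ and the \emph{annihilating case} $j\neq k$; when $j\neq k$ but $l=i$ the product $e_{kl}e_{ij}=e_{kj}$ is itself principal (with $k<i<j$), so Lemma \ref{pro2} transports the identity for $e_{kl}e_{ij}$ back to $e_{ij}e_{kl}$ and disposes of that configuration, leaving only the genuinely null configuration $j\neq k,\ l\neq i$ to be handled directly. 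In every case the companion identity \eqref{gh3} follows from Lemma \ref{pro2}, so it suffices to establish \eqref{gh2}.

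In the principal case I would expand
\[
f(e_{il})=f(e_{ij}\circ e_{jl})=g(e_{ij})\circ e_{jl}+e_{ij}\circ h(e_{jl})
\]
using \eqref{gh4} and \eqref{gh5}. The right-hand side breaks into four one-sided products lying in distinct bands of $\mathcal{T}_n(C)$: the wanted terms $g(e_{ij})e_{jl}$ (column $l$) and $e_{ij}h(e_{jl})$ (row $i$), and the two unwanted cross terms $e_{jl}g(e_{ij})=\sum_p g_{lp}^{(ij)}e_{jp}$ (row $j$, columns $\ge l$) and $h(e_{jl})e_{ij}=\sum_m h_{mi}^{(jl)}e_{mj}$ (column $j$, rows $\le i$). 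Since row $j$ and column $j$ meet neither column $l$ nor row $i$, it is enough to show these two cross terms vanish; then $f(e_{ij}e_{jl})=g(e_{ij})e_{jl}+e_{ij}h(e_{jl})$, the first half of \eqref{gh2}, and the $h,g$-half is obtained identically by interchanging $g$ and $h$.

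The cross terms vanish as soon as row $l$ of $g(e_{ij})$ (the entries $g_{lp}^{(ij)}$, $p\ge l$) and column $i$ of $h(e_{jl})$ (the entries $h_{mi}^{(jl)}$, $m\le i$) are zero, and both are relations of exactly the type already produced in Lemma \ref{lem4}. Pairing $e_{ij}$ with the idempotent $e_{ll}$ is an annihilating product (because $l>j$), and the coefficient comparison in \eqref{gh1.3}, as in \eqref{gh16} together with its $g\leftrightarrow h$ mirror, forces rows $l$ of both $h(e_{ij})$ and $g(e_{ij})$ to vanish; pairing $e_{jl}$ with $e_{ii}$ is annihilating (because $i<j$), and the term $h(e_{jl})e_{ii}$ appearing in the expansion \eqref{gh12}, read off exactly as in \eqref{gh13}, forces column $i$ of $h(e_{jl})$ to vanish. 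I would collect these vanishing relations (and their mirrors) first and then substitute them back.

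In the null case $j\neq k,\ l\neq i$ the left side is $f(0)=0$, so \eqref{gh1.3} gives $g(e_{ij})\circ e_{kl}+e_{ij}\circ h(e_{kl})=0$, where the four one-sided products now occupy column $l$, row $i$, row $k$, and column $j$; the task is to peel off the associative half $g(e_{ij})e_{kl}+e_{ij}h(e_{kl})$ and show it vanishes on its own via the Lemma \ref{lem4}-type relations. This is where I expect the main obstacle to lie: unlike in Lemma \ref{lem4}, both factors are strictly upper triangular, so for special orderings of $i,j,k,l$ (for instance $i=k$ or $j=l$) the four bands overlap, and one must verify, ordering by ordering, that every coefficient lying in an overlap is already annihilated by the recorded relations, so that the Jordan identity genuinely decouples into its two associative halves. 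Carrying out this bookkeeping—tracking precisely which entries of $g(e_{ij}),h(e_{kl})$ and of their $g\leftrightarrow h$ mirrors are killed—is the delicate and lengthy part of the argument, whereas the principal case reduces, as above, to a single cross-term cancellation.
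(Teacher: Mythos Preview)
Your plan matches the paper's: split by whether $e_{ij}e_{kl}$ is zero or nonzero, and in each case extract the associative identity from the Jordan relation together with the row/column vanishing already recorded in Lemmas~\ref{lem3}--\ref{lem4}. The only organizational differences are that the paper subdivides the null case by the sign of $j-k$ rather than by whether $l=i$, and in the principal case $j=k$ the paper verifies the annihilating reverse product $e_{kj}e_{ik}=0$ (with $i<k<j$, then appeals to Lemma~\ref{pro2}) whereas you attack the nonzero direction $e_{ij}e_{jl}=e_{il}$ directly; both routes reduce to the same coefficient relations coming from pairing the off-diagonal unit with suitable idempotents. One harmless slip: in your principal-case expansion the four bands are \emph{not} disjoint---row $j$ meets column $l$ at position $(j,l)$ and column $j$ meets row $i$ at position $(i,j)$---but your argument does not actually use disjointness, since once the cross terms $e_{jl}g(e_{ij})$ and $h(e_{jl})e_{ij}$ are shown to vanish the desired equality follows regardless of any overlap.
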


\begin{proof}
We prove
\begin{equation}
\label{gh17}
f(e_{ij}e_{kl})=g(e_{ij})e_{kl}+e_{ij}h(e_{kl})=h(e_{ij})e_{kl}+e_{ij}g(e_{kl}), ~\text{where} ~i<j,~k<l.
\end{equation}

\textbf{Case 1.} Let $j=k$. Then \eqref{gh17} equivalent to $f(e_{kj}e_{ik})=g(e_{kj})e_{ik}+e_{kj}h(e_{ik})=h(e_{kj})e_{ik}+e_{kj}g(e_{ik}), ~\text{where} ~i<k<j.$

For $i<k<j$,
\begin{equation}
\label{gh18}
\begin{aligned}
f(e_{ij})&=f(e_{kj}\circ e_{ik})=g(e_{kj})\circ e_{ik}+e_{kj}\circ h(e_{ik})\\
&=g_{1i}^{(kj)}e_{1k}+g_{2i}^{(kj)}e_{2k}+\dots+g_{ii}^{(kj)}e_{ik}+g_{kk}^{(kj)}e_{ik}+g_{k,k+1}^{(kj)}e_{i,k+1}+\dots+g_{kn}^{(kj)}e_{in}\\
&+h_{jj}^{(ik)}e_{kj}+h_{j,j+1}^{(ik)}e_{k,j+1}+\dots+h_{jn}^{(ik)}e_{kn}+h_{1k}^{(ik)}e_{1j}+h_{2k}^{(ik)}e_{2j}+\dots+h_{kk}^{(ik)}e_{kj}.
\end{aligned}
\end{equation}

Comparing the coefficients of $e_{1k},e_{2k},\dots,e_{i-1,k}$ from \eqref{gh8} and \eqref{gh18},
\begin{equation}
\tag{3d}\label{gh3d}
g_{1i}^{(kj)}=g_{2i}^{(kj)}=\dots=g_{i-1,i}^{(kj)}=0.
\end{equation}

Comparing the coefficients of $e_{k,j+1},e_{k,j+2},\dots,e_{kn}$ from \eqref{gh8} and \eqref{gh18},
\begin{equation}
\tag{3e}\label{gh3e}
h_{j,j+1}^{(ik)}=h_{j,j+2}^{(ik)}=\dots=h_{jn}^{(ik)}=0.
\end{equation}

As discussed in Lemma \eqref{lem4}, from $h(e_{ii})e_{kj}+e_{ii}g(e_{kj})=f(e_{ii}e_{kj})=0$ and $h(e_{ik})e_{jj}+e_{ik}g(e_{jj})=f(e_{ik}e_{jj})=0$, equating the coefficients of $e_{ii}$ and $e_{jj}$ respectively,
\begin{equation}
\label{gh19}
g_{ii}^{(kj)}=0=h_{jj}^{(ik)}.
\end{equation}

\begin{align*}
&f(e_{kj} e_{ik})=0.\\
&g(e_{kj})e_{ik}+e_{kj} h(e_{ik})\\
&=g_{1i}^{(kj)}e_{1k}+g_{2i}^{(kj)}e_{2k}+\dots+g_{ii}^{(kj)}e_{ik} +h_{jj}^{(ik)}e_{kj}+h_{j,j+1}^{(ik)}e_{k,j+1}+\dots+h_{jn}^{(ik)}e_{kn}\\
&=g_{1i}^{(kj)}e_{1k}+g_{2i}^{(kj)}e_{2k}+\dots+g_{i-1,i}^{(kj)}e_{i-1,k}\\
&+g_{ii}^{(kj)}e_{ik}+h_{jj}^{(ik)}e_{kj}\\
&+h_{j,j+1}^{(ik)}e_{k,j+1}+h_{j,j+2}^{(ik)}e_{k,j+2}+\dots+h_{jn}^{(ik)}e_{kn}\\
&=0~\text{(by \eqref{gh3d},\eqref{gh19},\eqref{gh3e}) }.\\
&~\text{Similarly, we can prove that,}~h(e_{kj})e_{ik}+e_{kj} g(e_{ik})=0.
\end{align*}

\textbf{Case 2.} Let $j<k$.

For $i<j<k<l$,
\begin{equation}
\label{gh20}
\begin{aligned}
0&=f(e_{ij}\circ e_{kl})=g(e_{ij})\circ e_{kl}+e_{ij}\circ h(e_{kl})\\
&=g_{1k}^{(ij)}e_{1l}+g_{2k}^{(ij)}e_{2l}+\dots+g_{kk}^{(ij)}e_{kl}+g_{ll}^{(ij)}e_{kl}+g_{l,l+1}^{(ij)}e_{k,l+1}+\dots+g_{ln}^{(ij)}e_{kn}\\
&+h_{jj}^{(kl)}e_{ij}+h_{j,j+1}^{(kl)}e_{i,j+1}+\dots+h_{jn}^{(kl)}e_{in}+h_{1i}^{(kl)}e_{1j}+h_{2i}^{(kl)}e_{2j}+\dots+h_{ii}^{(kl)}e_{ij}.
\end{aligned}
\end{equation}

Equating coefficients of $e_{1l},e_{2l},\dots,e_{i-1,l},e_{i+1,l},\dots,e_{k-1,l}$ from \eqref{gh20},
\begin{equation}
\tag{20a}\label{20a}
g_{1k}^{(ij)}=g_{2k}^{(ij)}=\dots=g_{i-1,k}^{(ij)}=g_{i+1,k}^{(ij)}=\dots=g_{k-1,k}^{(ij)}=0.
\end{equation}

Equating coefficients of $e_{il}$ from \eqref{gh20},
\begin{equation}
\tag{20b}\label{20b}
g_{ik}^{(ij)}+h_{jl}^{(kl)}=0.
\end{equation}

Equating coefficients of $e_{i,j+1},e_{i,j+2},\dots,e_{i,l-1},e_{i,l+1},\dots,e_{in}$ from \eqref{gh20},
\begin{equation}
\tag{20c}\label{20c}
h_{j,j+1}^{(kl)}=h_{j,j+2}^{(kl)}=\dots=h_{j,l-1}^{(kl)}=h_{j,l+1}^{(kl)}=\dots=h_{jn}^{(kl)}=0.
\end{equation}

By Lemma \eqref{lem4}, equating the coefficients of $e_{kk}$ and $e_{jj}$ from $0=f(e_{ij}e_{kk})=g(e_{ij})e_{kk}+e_{ij}h(e_{kk})$ and $0=f(e_{jj}e_{kl})=g(e_{jj})e_{kl}+e_{jj}h(e_{kl})$ respectively,
\begin{equation}
\label{gh21}
g_{kk}^{(ij)}=0=g_{jj}^{(kl)}.
\end{equation}

\begin{align*}
&f(e_{ij} e_{kl})=0.\\
&g(e_{ij})e_{kl}+e_{ij} h(e_{kl})\\
&=g_{1k}^{(ij)}e_{1l}+g_{2k}^{(ij)}e_{2l}+\dots+g_{kk}^{(ij)}e_{kl}+h_{jj}^{(kl)}e_{ij}+h_{j,j+1}^{(kl)}e_{i,j+1}+\dots+h_{jn}^{(kl)}e_{in}\\
&=g_{1k}^{(ij)}e_{1l}+g_{2k}^{(ij)}e_{2l}+\dots+g_{i-1,k}^{(ij)}e_{i-1,l}+g_{i+1,k}^{(ij)}e_{i+1,l}+\dots+g_{k-1,k}^{(ij)}e_{k-1,l}\\
&+(g_{ik}^{(ij)}+h_{jl}^{(kl)})e_{il}+g_{kk}^{(ij)}e_{kl}++h_{jj}^{(kl)}e_{ij}\\
&+h_{j,j+1}^{(kl)}e_{i,j+1}+h_{j,j+2}^{(kl)}e_{i,j+2}+\dots+h_{j,l-1}^{(kl)}e_{i,l-1}+h_{j,l+1}^{(kl)}e_{i,l+1}\dots+h_{jn}^{(kl)}e_{in}\\
&=0~\text{(by \eqref{20a},\eqref{20b},\eqref{gh21},\eqref{20c})}.\\
&~\text{Simiarly we can show that,}~ h(e_{ij})e_{kl}+e_{ij} g(e_{kl})=0.
\end{align*}

\textbf{Case 3.} Let $j>k$. So we have $i<j,j>k,k<l$.

From Lemma \eqref{lem4},
\begin{equation}
\label{gh22}
\begin{aligned}
&0=f(e_{ij}e_{kk})=g(e_{ij})e_{kk}+e_{ij}h(e_{kk})\\
&=g_{1k}^{(ij)}e_{1k}+g_{2k}^{(ij)}e_{2k}+\dots+g_{kk}^{(ij)}e_{kk}+h_{jj}^{(kk)}e_{ij}+h_{j,j+1}^{(kk)}e_{i,j+1}+\dots+h_{jn}^{(kk)}e_{in}\\
&\implies g_{1k}^{(ij)}=g_{2k}^{(ij)}=\dots=g_{kk}^{(ij)}=0.
\end{aligned}
\end{equation}

Similarly,
\begin{equation}
\label{gh23}
\begin{aligned}
&0=f(e_{jj}e_{kl})=g(e_{jj})e_{kl}+e_{jj}h(e_{kl})\\
&=g_{1k}^{(jj)}e_{1l}+g_{2k}^{(jj)}e_{2l}+\dots+g_{kk}^{(jj)}e_{kl}+h_{jj}^{(kl)}e_{jj}+h_{j,j+1}^{(kl)}e_{j,j+1}+\dots+h_{jn}^{(kl)}e_{jn}\\
&\implies h_{jj}^{(kl)}=h_{j,j+1}^{(kl)}=\dots=h_{jn}^{(kl)}=0.
\end{aligned}
\end{equation}

\begin{align*}
&f(e_{ij} e_{kl})=0.\\
&g(e_{ij})e_{kl}+e_{ij} h(e_{kl})\\
&=g_{1k}^{(ij)}e_{1l}+g_{2k}^{(ij)}e_{2l}+\dots+g_{kk}^{(ij)}e_{kl}+h_{jj}^{(kl)}e_{ij}+h_{j,j+1}^{(kl)}e_{i,j+1}+\dots+h_{jn}^{(kl)}e_{in}=0\\
&~\text{(by \eqref{gh22} and \eqref{gh23})}.\\
&~\text{Similarly we can show that}~ h(e_{ij})e_{kl}+e_{ij} g(e_{kl})=0.
\end{align*}
\end{proof}

\textbf{Proof of Theorem \eqref{thm1}:}
Let $f$ be a Jordan $\{g, h\}$-derivation on $\mathcal{T}_n(C)$. Since $f(e_{ii}^2)=f(e_{ii})=g(e_{ii})e_{ii}+e_{ii}h(e_{ii})$, we get $f(e_{ii}^2)=h(e_{ii})e_{ii}+e_{ii}g(e_{ii})$, for all  $i=1,2,\dots,n$ (by Lemma \ref{pro1}). By Lemma \ref{lem3}-\ref{lem5}, we can prove that $f (xy) = g(x)y + xh(y) = h(x)y + xg(y) ~\text{for all}~ x, y \in \mathcal{T}_n(C)$.

\begin{remark}
Now we give an example where $f$ is a Jordan $\{g,h\}$-derivation over $\mathcal{T}_2(C)$, but not a $\{g,h\}$-derivation over $\mathcal{T}_2(C)$. For example, let $g:\mathcal{T}_2(C)\rightarrow \mathcal{T}_2(C)$ as $g(x)=a\circ x$, where $a=e_{11}+e_{12}+e_{22}$ and $C$ is the field of complex numbers. Then $0$ is Jordan $\{g, -g\}$ derivation, but $0(e_{11}^2)=0 \neq -e_{12}=g(e_{11})e_{11}+e_{11}((-g)(e_{11}))$, that is, $0$ is not a $\{g, -g\}$ derivation.
\end{remark}

\section{Jordan \{g,h\} derivation on $\mathcal{M}_n(C)$}
Now we study Jordan $\{g,h\}$-derivation over full matrix algebra $\mathcal{M}_n(C)$. We prove every Jordan $\{g,h\}$-derivation on $\mathcal{M}_n(C)$ is a $\{g,h\}$-derivation.

\begin{theorem}
\label{thm2}
Let $\mathcal{M}_n(C)$ be the algebra of $n\times n$ matrices over $C$. Then every Jordan $\{g,h\}$-derivation on $\mathcal{M}_n(C)$, $n \geq 2$, into itself is a $\{g,h\}$-derivation.
\end{theorem}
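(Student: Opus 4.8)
The plan is to prove the two-sided multiplication rule \eqref{gh2} for every pair of matrix units; bilinearity of $f,g,h$ then yields $f(xy)=g(x)y+xh(y)=h(x)y+xg(y)$ for all $x,y\in\mathcal{M}_n(C)$, with Lemma~\ref{pro2} used (as in \eqref{gh3}) to pass between the two orders of multiplication. The products $e_{ij}e_{kl}$ fall into two families: those that are an off-diagonal unit or zero, and the diagonal idempotents. For the first family I would repeat the computations of Lemmas~\ref{lem3}--\ref{lem5} in spirit, the only difference being that the expansions \eqref{gh4}--\eqref{gh5} now range over all units $e_{mp}$ rather than over $m\le p$; the coefficient comparisons are entirely analogous (and more symmetric), so I treat them as routine bookkeeping.

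The decisive step, and the reason the extra hypothesis of Theorem~\ref{thm1} can be dropped, is that the diagonal condition $f(e_{ii})=g(e_{ii})e_{ii}+e_{ii}h(e_{ii})$ now holds automatically. To see this, fix $i$ and choose any $j\neq i$ (possible since $n\ge2$), and apply \eqref{gh1.3} to the relations $e_{ii}\circ e_{ij}=e_{ij}$ and $e_{ji}\circ e_{ii}=e_{ji}$; here I use that the lower unit $e_{ji}$ belongs to $\mathcal{M}_n(C)$, whereas it is unavailable in $\mathcal{T}_n(C)$. Writing each side in the two ways allowed by \eqref{gh1.3} and subtracting, I obtain
\[
(g(e_{ii})-h(e_{ii}))\circ e_{ij}=e_{ii}\circ(g(e_{ij})-h(e_{ij})),
\]
\[
(g(e_{ji})-h(e_{ji}))\circ e_{ii}=e_{ji}\circ(g(e_{ii})-h(e_{ii})).
\]
Put $D=g(e_{ii})-h(e_{ii})$. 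Comparing the $(m,j)$-entry of the first identity for $m\neq i$ gives $D_{mi}=0$, and comparing the $(j,p)$-entry of the second for $p\neq i$ gives $D_{ip}=0$; hence all off-diagonal entries of $D$ in the $i$-th row and column vanish, i.e.\ $D$ commutes with $e_{ii}$.

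To finish this step, apply \eqref{gh1.3} to $e_{ii}\circ e_{ii}=2e_{ii}$ to get $2f(e_{ii})=g(e_{ii})\circ e_{ii}+e_{ii}\circ h(e_{ii})$; since $D$ commutes with $e_{ii}$ the right-hand side collapses to $2(g(e_{ii})e_{ii}+e_{ii}h(e_{ii}))$, and $2$-torsion-freeness of $C$ gives $f(e_{ii})=g(e_{ii})e_{ii}+e_{ii}h(e_{ii})$, while Lemma~\ref{pro1} supplies the companion $f(e_{ii})=h(e_{ii})e_{ii}+e_{ii}g(e_{ii})$. With the diagonal condition in hand, the remaining products---including those that produce a diagonal unit, such as $e_{ij}e_{ji}=e_{ii}$---are settled by the same coefficient-matching as in Lemmas~\ref{lem3}--\ref{lem5}, and bilinearity assembles the result.

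I expect the genuine obstacle to be exactly this diagonal condition: the other cases are lengthy but mechanical index bookkeeping, whereas this is precisely the place where the triangular argument required an assumption. Making it automatic forces one to exploit the symmetric presence of both $e_{ij}$ and $e_{ji}$ in $\mathcal{M}_n(C)$, and the entry comparisons above are the crux.
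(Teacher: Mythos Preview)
Your proposal is correct and follows essentially the same route as the paper. The paper's crucial Lemma~\ref{lem5z} establishes the diagonal identity $f(e_{ii})=g(e_{ii})e_{ii}+e_{ii}h(e_{ii})$ by exactly your mechanism: comparing the two expansions of $f(e_{ii}\circ e_{ij})$ and of $f(e_{ji}\circ e_{ii})$ (their equations \eqref{gh3y}--\eqref{gh4y} and \eqref{gh9y}--\eqref{gh10y}) yields $g_{mi}^{(ii)}=h_{mi}^{(ii)}$ and $g_{im}^{(ii)}=h_{im}^{(ii)}$ for $m\neq i$, which is precisely your statement that $D=g(e_{ii})-h(e_{ii})$ commutes with $e_{ii}$; then $2$-torsion-freeness finishes. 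Your packaging via $D$ is slightly cleaner, but the computation is identical. The remaining cases you label ``routine bookkeeping'' are carried out explicitly in the paper as Lemmas~\ref{lem6z}--\ref{lem8z}; note only that the product $e_{ij}e_{ji}=e_{ii}$ (Case~3 of Lemma~\ref{lem8z}) has no triangular analogue and does require the diagonal identity you just proved, which you correctly flag.
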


Let $f:\mathcal{M}_n(C)\rightarrow \mathcal{M}_n(C)$ be a Jordan $\{g, h\}$-derivation.

First we prove
\begin{equation}
\label{gh1z}
f (e_{ij}e_{kl}) = g(e_{ij})e_{kl} + e_{ij}h(e_{kl}) = h(e_{ij})e_{kl} + e_{ij}g(e_{kl}),
\end{equation}
which is equivalent to
\begin{equation}
\label{gh2z}
f (e_{kl}e_{ij}) = g(e_{kl})e_{ij} + e_{kl}h(e_{ij}) = h(e_{kl})e_{ij} + e_{kl}g(e_{ij})~\text{(by Lemma \ref{pro2})}.
\end{equation}

Now, let
\begin{equation}
\label{gh3z}
g(e_{ij})=\sum_{k=1}^{n}\sum_{l=1}^{n}g_{kl}^{(ij)}e_{kl}~, ~\text{where} ~g_{kl}^{(ij)} \in C
\end{equation}

and \begin{equation}
\label{gh4z}
h(e_{ij})=\sum_{k=1}^{n}\sum_{l=1}^{n}h_{kl}^{(ij)}e_{kl}~, ~\text{where} ~h_{kl}^{(ij)} \in C.
\end{equation}

Now we use \eqref{gh3z} and \eqref{gh4z} to derive next few identities. Let $i\neq j$.
\begin{equation}
\label{gh1y}
\begin{aligned}
0&=f(e_{ii}\circ e_{jj})=g(e_{ii})\circ e_{jj}+e_{ii}\circ h(e_{jj})\\
&=g_{1j}^{(ii)}e_{1j}+g_{2j}^{(ii)}e_{2j}+\dots+g_{nj}^{(ii)}e_{nj}+g_{j1}^{(ii)}e_{j1}+g_{j2}^{(ii)}e_{j2}+\dots+g_{jn}^{(ii)}e_{jn}\\
&+h_{i1}^{(jj)}e_{i1}+h_{i2}^{(jj)}e_{i2}+\dots+h_{in}^{(jj)}e_{in}+h_{1i}^{(jj)}e_{1i}+h_{2i}^{(jj)}e_{2i}+\dots+h_{ni}^{(jj)}e_{ni}.
\end{aligned}
\end{equation}

\begin{equation}
\label{gh2y}
\begin{aligned}
0&=f(e_{ii}\circ e_{jj})=h(e_{ii})\circ e_{jj}+e_{ii}\circ g(e_{jj})\\
&=h_{1j}^{(ii)}e_{1j}+h_{2j}^{(ii)}e_{2j}+\dots+h_{nj}^{(ii)}e_{nj}+h_{j1}^{(ii)}e_{j1}+h_{j2}^{(ii)}e_{j2}+\dots+h_{jn}^{(ii)}e_{jn}\\
&+g_{i1}^{(jj)}e_{i1}+g_{i2}^{(jj)}e_{i2}+\dots+g_{in}^{(jj)}e_{in}+g_{1i}^{(jj)}e_{1i}+g_{2i}^{(jj)}e_{2i}+\dots+g_{ni}^{(jj)}e_{ni}.
\end{aligned}
\end{equation}

\begin{equation}
\label{gh3y}
\begin{aligned}
f(e_{ij})&=f(e_{ii}\circ e_{ij})=g(e_{ii})\circ e_{ij}+e_{ii}\circ h(e_{ij})\\
&=g_{1i}^{(ii)}e_{1j}+g_{2i}^{(ii)}e_{2j}+\dots+g_{ni}^{(ii)}e_{nj}+g_{j1}^{(ii)}e_{i1}+g_{j2}^{(ii)}e_{i2}+\dots+g_{jn}^{(ii)}e_{in}\\
&+h_{i1}^{(ij)}e_{i1}+h_{i2}^{(ij)}e_{i2}+\dots+h_{in}^{(ij)}e_{in}+h_{1i}^{(ij)}e_{1i}+h_{2i}^{(ij)}e_{2i}+\dots+h_{ni}^{(ij)}e_{ni}.
\end{aligned}
\end{equation}

\begin{equation}
\label{gh4y}
\begin{aligned}
f(e_{ij})&=f(e_{ii}\circ e_{ij})=h(e_{ii})\circ e_{ij}+e_{ii}\circ g(e_{ij})\\
&=h_{1i}^{(ii)}e_{1j}+h_{2i}^{(ii)}e_{2j}+\dots+h_{ni}^{(ii)}e_{nj}+h_{j1}^{(ii)}e_{i1}+h_{j2}^{(ii)}e_{i2}+\dots+h_{jn}^{(ii)}e_{in}\\
&+g_{i1}^{(ij)}e_{i1}+g_{i2}^{(ij)}e_{i2}+\dots+g_{in}^{(ij)}e_{in}+g_{1i}^{(ij)}e_{1i}+g_{2i}^{(ij)}e_{2i}+\dots+g_{ni}^{(ij)}e_{ni}.
\end{aligned}
\end{equation}

\begin{equation}
\label{gh5y}
\begin{aligned}
f(e_{ij})&=f(e_{ij}\circ e_{jj})=g(e_{ij})\circ e_{jj}+e_{ij}\circ h(e_{jj})\\
&=g_{1j}^{(ij)}e_{1j}+g_{2j}^{(ij)}e_{2j}+\dots+g_{nj}^{(ij)}e_{nj}+g_{j1}^{(ij)}e_{j1}+g_{j2}^{(ij)}e_{j2}+\dots+g_{jn}^{(ij)}e_{jn}\\
&+h_{j1}^{(jj)}e_{i1}+h_{j2}^{(jj)}e_{i2}+\dots+h_{jn}^{(jj)}e_{in}+h_{1i}^{(jj)}e_{1j}+h_{2i}^{(jj)}e_{2j}+\dots+h_{ni}^{(jj)}e_{nj}.
\end{aligned}
\end{equation}

\begin{equation}
\label{gh6y}
\begin{aligned}
f(e_{ij})&=f(e_{ij}\circ e_{jj})=h(e_{ij})\circ e_{jj}+e_{ij}\circ g(e_{jj})\\
&=h_{1j}^{(ij)}e_{1j}+h_{2j}^{(ij)}e_{2j}+\dots+h_{nj}^{(ij)}e_{nj}+h_{j1}^{(ij)}e_{j1}+h_{j2}^{(ij)}e_{j2}+\dots+h_{jn}^{(ij)}e_{jn}\\
&+g_{j1}^{(jj)}e_{i1}+g_{j2}^{(jj)}e_{i2}+\dots+g_{jn}^{(jj)}e_{in}+g_{1i}^{(jj)}e_{1j}+g_{2i}^{(jj)}e_{2j}+\dots+g_{ni}^{(jj)}e_{nj}.
\end{aligned}
\end{equation}

Similarly, we have

\begin{equation}
\label{gh7y}
\begin{aligned}
f(e_{ji})&=g_{1j}^{(jj)}e_{1i}+g_{2j}^{(jj)}e_{2i}+\dots+g_{nj}^{(jj)}e_{ni}+g_{i1}^{(jj)}e_{j1}+g_{i2}^{(jj)}e_{j2}+\dots+g_{in}^{(jj)}e_{in}\\
&+h_{j1}^{(ji)}e_{j1}+h_{j2}^{(ji)}e_{j2}+\dots+h_{jn}^{(ji)}e_{jn}+h_{1j}^{(ji)}e_{1j}+h_{2j}^{(ji)}e_{2j}+\dots+h_{nj}^{(ji)}e_{nj},
\end{aligned}
\end{equation}

\begin{equation}
\label{gh8y}
\begin{aligned}
f(e_{ji})&=h_{1j}^{(jj)}e_{1i}+h_{2j}^{(jj)}e_{2i}+\dots+h_{nj}^{(jj)}e_{ni}+h_{i1}^{(jj)}e_{j1}+h_{i2}^{(jj)}e_{j2}+\dots+h_{in}^{(jj)}e_{in}\\
&+g_{j1}^{(ji)}e_{j1}+g_{j2}^{(ji)}e_{j2}+\dots+g_{jn}^{(ji)}e_{jn}+g_{1j}^{(ji)}e_{1j}+g_{2j}^{(ji)}e_{2j}+\dots+g_{nj}^{(ji)}e_{nj},
\end{aligned}
\end{equation}

\begin{equation}
\label{gh9y}
\begin{aligned}
f(e_{ji})&=g_{1i}^{(ji)}e_{1i}+g_{2i}^{(ji)}e_{2i}+\dots+g_{ni}^{(ji)}e_{ni}+g_{i1}^{(ji)}e_{i1}+g_{i2}^{(ji)}e_{i2}+\dots+g_{in}^{(ji)}e_{in}\\
&+h_{i1}^{(ii)}e_{j1}+h_{i2}^{(ii)}e_{j2}+\dots+h_{in}^{(ii)}e_{jn}+h_{1j}^{(ii)}e_{1i}+h_{2j}^{(ii)}e_{2i}+\dots+h_{nj}^{(ii)}e_{ni},
\end{aligned}
\end{equation}

\begin{equation}
\label{gh10y}
\begin{aligned}
f(e_{ji})&=h_{1i}^{(ji)}e_{1i}+h_{2i}^{(ji)}e_{2i}+\dots+h_{ni}^{(ji)}e_{ni}+h_{i1}^{(ji)}e_{i1}+h_{i2}^{(ji)}e_{i2}+\dots+h_{in}^{(ji)}e_{in}\\
&+g_{i1}^{(ii)}e_{j1}+g_{i2}^{(ii)}e_{j2}+\dots+g_{in}^{(ii)}e_{jn}+g_{1j}^{(ii)}e_{1i}+g_{2j}^{(ii)}e_{2i}+\dots+g_{nj}^{(ii)}e_{ni}.
\end{aligned}
\end{equation}

We have several lemmas.
\begin{lemma}
\label{lem5z}
$f (e_{ii}e_{ii}) =g(e_{ii}) e_{ii} + e_{ii} h(e_{ii})=h(e_{ii}) e_{ii} + e_{ii} g(e_{ii})$, for $i=1,2,\dots,n$.
\end{lemma}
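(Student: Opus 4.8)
The plan is to reduce everything to the single idempotent $e_{ii}$ and to exploit that, unlike in $\mathcal{T}_n(C)$, both $e_{ij}$ and $e_{ji}$ are available for every $j\neq i$. First I would apply \eqref{gh1.3} with $x=y=e_{ii}$. Since $e_{ii}\circ e_{ii}=2e_{ii}$ and $f$ is linear, this gives
$$2f(e_{ii})=g(e_{ii})e_{ii}+e_{ii}g(e_{ii})+e_{ii}h(e_{ii})+h(e_{ii})e_{ii}.$$
Because $C$ is $2$-torsion free, it then suffices to show that the two ``halves'' $g(e_{ii})e_{ii}+e_{ii}h(e_{ii})$ and $h(e_{ii})e_{ii}+e_{ii}g(e_{ii})$ coincide; each will automatically equal $f(e_{ii})$, which is exactly the assertion. (Alternatively, once the first equality $f(e_{ii})=g(e_{ii})e_{ii}+e_{ii}h(e_{ii})$ is in hand, the second is immediate from Lemma \ref{pro1} applied to $a=e_{ii}$.)

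The difference of the two halves is $De_{ii}-e_{ii}D$ with $D:=g(e_{ii})-h(e_{ii})$. Writing $D=\sum_{k,l}D_{kl}e_{kl}$, one has $De_{ii}=\sum_k D_{ki}e_{ki}$ (the $i$th column) and $e_{ii}D=\sum_l D_{il}e_{il}$ (the $i$th row); the $(i,i)$ entries cancel, so the difference vanishes precisely when
$$g_{ki}^{(ii)}=h_{ki}^{(ii)}\ (k\neq i)\quad\text{and}\quad g_{il}^{(ii)}=h_{il}^{(ii)}\ (l\neq i).$$
These two coincidences are what I must produce.

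To get the column relation, fix any $j\neq i$ (possible since $n\geq 2$) and compare the two expansions \eqref{gh3y} and \eqref{gh4y} of $f(e_{ij})=f(e_{ii}\circ e_{ij})$. For $k\neq i$ the unit $e_{kj}$ lies off the $i$th row and, since $j\neq i$, off the $i$th column, so it is fed only by $g(e_{ii})e_{ij}$ in \eqref{gh3y} and by $h(e_{ii})e_{ij}$ in \eqref{gh4y}; equating its coefficients yields $g_{ki}^{(ii)}=h_{ki}^{(ii)}$. Symmetrically, comparing \eqref{gh9y} and \eqref{gh10y}, the two expansions of $f(e_{ji})=f(e_{ji}\circ e_{ii})$, and reading off the coefficient of $e_{jl}$ with $l\neq i$ (fed only by $e_{ji}h(e_{ii})$ and $e_{ji}g(e_{ii})$ respectively) gives $g_{il}^{(ii)}=h_{il}^{(ii)}$. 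Feeding both back into the previous paragraph shows the two halves agree, and the opening identity then delivers $f(e_{ii})=g(e_{ii})e_{ii}+e_{ii}h(e_{ii})=h(e_{ii})e_{ii}+e_{ii}g(e_{ii})$.

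I expect the main obstacle to be the bookkeeping in the coefficient comparison: one must check that the targeted unit ($e_{kj}$ with $k\neq i$, resp.\ $e_{jl}$ with $l\neq i$) is not secretly hit by any of the row-$i$ or column-$i$ terms appearing on either side, so that the comparison isolates exactly $g_{ki}^{(ii)}$ against $h_{ki}^{(ii)}$ (resp.\ $g_{il}^{(ii)}$ against $h_{il}^{(ii)}$). This is also the step that genuinely uses the full algebra: it requires both $e_{ij}$ and $e_{ji}$ for a single $j\neq i$, and it is precisely the unavailability of one of these in $\mathcal{T}_n(C)$ that forced the extra hypothesis $f(e_{ii})=g(e_{ii})e_{ii}+e_{ii}h(e_{ii})$ in Theorem \ref{thm1}.
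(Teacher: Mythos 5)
Your proposal is correct and follows essentially the same route as the paper: both extract $g_{ki}^{(ii)}=h_{ki}^{(ii)}$ ($k\neq i$) by comparing \eqref{gh3y} with \eqref{gh4y} and $g_{il}^{(ii)}=h_{il}^{(ii)}$ ($l\neq i$) by comparing \eqref{gh9y} with \eqref{gh10y}, then conclude via $2$-torsion freeness from $f(e_{ii}\circ e_{ii})$. Your reformulation in terms of the commutator of $D=g(e_{ii})-h(e_{ii})$ with $e_{ii}$ is only a cosmetic repackaging of the paper's coefficient bookkeeping.
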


\begin{proof}
Equating the coefficients of $e_{1j},\dots,e_{i-1,j},e_{i+1,j},\dots,e_{nj}$ from \eqref{gh3y} and \eqref{gh4y}, we have
\begin{equation}
\label{gh11y}
g_{1i}^{(ii)}=h_{1i}^{(ii)},\dots,g_{i-1,i}^{(ii)}=h_{i-1,i}^{(ii)},g_{i+1,i}^{(ii)}=h_{i+1,i}^{(ii)},\dots,g_{ni}^{(ii)}=h_{ni}^{(ii)}.
\end{equation}

Similarly, equating the coefficients of $e_{j1},\dots,e_{j,i-1},e_{j,i+1},\dots,e_{jn}$ from \eqref{gh9y} and \eqref{gh10y}, we have

\begin{equation}
\label{gh12y}
g_{i1}^{(ii)}=h_{i1}^{(ii)},\dots,g_{i,i-1}^{(ii)}=h_{i,i-1}^{(ii)},g_{i,i+1}^{(ii)}=h_{i,i+1}^{(ii)},\dots,g_{in}^{(ii)}=h_{in}^{(ii)}.
\end{equation}

By using \eqref{gh11y} and \eqref{gh12y}, we have
\begin{align*}
f(e_{ii}\circ e_{ii})=2(g(e_{ii})e_{ii}+e_{ii}h(e_{ii})) \implies f(e_{ii}^2)=g(e_{ii})e_{ii}+e_{ii}h(e_{ii}).
\end{align*}

Similarly we can prove that,
\begin{align*}
f(e_{ii}^2)=h(e_{ii})e_{ii}+e_{ii}g(e_{ii}).
\end{align*}

\end{proof}

\begin{lemma}
\label{lem6z}
$f (e_{ii}e_{jj}) =g(e_{ii}) e_{jj} + e_{ii} h(e_{jj})=h(e_{ii}) e_{jj} + e_{ii} g(e_{jj})$, for $i\neq j$.
\end{lemma}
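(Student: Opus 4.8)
The plan is to exploit that $e_{ii}e_{jj}=0$ when $i\neq j$, so that $f(e_{ii}e_{jj})=f(0)=0$. Hence the assertion reduces to showing that each of the two expressions $g(e_{ii})e_{jj}+e_{ii}h(e_{jj})$ and $h(e_{ii})e_{jj}+e_{ii}g(e_{jj})$ vanishes separately. These are precisely one half of the already-expanded relations \eqref{gh1y} and \eqref{gh2y}, each of which asserts that $f(e_{ii}\circ e_{jj})=0$.

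First I would read off the four families of matrix units occurring in \eqref{gh1y}: the term $g(e_{ii})e_{jj}$ contributes column $j$, that is $\sum_k g_{kj}^{(ii)}e_{kj}$; the term $e_{jj}g(e_{ii})$ contributes row $j$; the term $e_{ii}h(e_{jj})$ contributes row $i$; and $h(e_{jj})e_{ii}$ contributes column $i$. Since $i\neq j$, these four lines in the matrix meet only at the four cells $(i,i)$, $(i,j)$, $(j,i)$, $(j,j)$. Equating coefficients in \eqref{gh1y} at cells lying on a single line forces $g_{kj}^{(ii)}=0$ for $k\neq i,j$ and $h_{il}^{(jj)}=0$ for $l\neq i,j$. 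At the two diagonal collisions $(j,j)$ and $(i,i)$ two equal coefficients add, so $2$-torsion-freeness yields $g_{jj}^{(ii)}=0$ and $h_{ii}^{(jj)}=0$, while the collision at $(i,j)$ gives the single relation $g_{ij}^{(ii)}+h_{ij}^{(jj)}=0$.

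Then I would substitute these vanishings into $g(e_{ii})e_{jj}+e_{ii}h(e_{jj})=\sum_k g_{kj}^{(ii)}e_{kj}+\sum_l h_{il}^{(jj)}e_{il}$: every summand except the two $(i,j)$-entries dies, and what survives is $(g_{ij}^{(ii)}+h_{ij}^{(jj)})e_{ij}=0$, proving the first equality. Repeating the identical bookkeeping on \eqref{gh2y}, with the roles of $g$ and $h$ interchanged, gives $h(e_{ii})e_{jj}+e_{ii}g(e_{jj})=0$; combining both with $f(e_{ii}e_{jj})=0$ finishes the lemma.

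I expect no genuine obstacle: this is the full-matrix analogue of Lemma \ref{lem3}, the only additional care being that in $\mathcal{M}_n(C)$ the lower-triangular cells also occur, so one must track both the row and the column of indices $i$ and $j$ rather than just the upper portion. The collision analysis at the four special cells $(i,i),(i,j),(j,i),(j,j)$ is the one point worth recording explicitly.
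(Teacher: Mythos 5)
Your proposal is correct and follows essentially the same route as the paper: expand $f(e_{ii}\circ e_{jj})=0$ via \eqref{gh1y}, extract $g_{kj}^{(ii)}=0$ for $k\neq i$, $h_{il}^{(jj)}=0$ for $l\neq j$ (using $2$-torsion-freeness at the diagonal collisions) and $g_{ij}^{(ii)}+h_{ij}^{(jj)}=0$, then substitute; and symmetrically with \eqref{gh2y}. Your explicit tracking of the four collision cells $(i,i),(i,j),(j,i),(j,j)$ is a slightly more careful presentation of exactly the coefficient comparisons recorded in \eqref{gh13y}--\eqref{gh15y}.
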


\begin{proof}
Comparing the coefficients of $e_{1j},\dots,e_{i-1,j},e_{i+1,j},\dots,e_{nj}$ from \eqref{gh1y},

\begin{equation}
\label{gh13y}
g_{1j}^{(ii)}=\dots=g_{i-1,j}^{(ii)}=g_{i+1,j}^{(ii)}=\dots=g_{nj}^{(ii)}=0.
\end{equation}

Comparing the coefficients of $e_{ij}$ from \eqref{gh1y},

\begin{equation}
\label{gh14y}
g_{ij}^{(ii)}+h_{ij}^{(jj)}=0.
\end{equation}

Comparing the coefficients of $e_{i1},\dots,e_{i,j-1},e_{i,j+1},\dots,e_{in}$ from \eqref{gh1y},

\begin{equation}
\label{gh15y}
h_{i1}^{(jj)}=\dots=h_{i,j-1}^{(jj)}=h_{i,j+1}^{(jj)}=\dots=h_{in}^{(jj)}=0.
\end{equation}

Using \eqref{gh13y},\eqref{gh14y} and \eqref{gh15y},

\begin{align*}
f (e_{ii}e_{jj})=0=g(e_{ii}) e_{jj} + e_{ii} h(e_{jj}).
\end{align*}

Similarly by using \eqref{gh2y},
\begin{align*}
f (e_{ii}e_{jj})=0=h(e_{ii}) e_{jj} + e_{ii} g(e_{jj}).
\end{align*}

\end{proof}

\begin{lemma}
\label{lem7z}
$f (e_{ii}e_{jk}) =g(e_{ii}) e_{jk} + e_{ii} h(e_{jk})=h(e_{ii}) e_{jk} + e_{ii} g(e_{jk})$, for $j\neq k$.
\end{lemma}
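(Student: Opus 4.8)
The plan is to decompose on how $i$ compares with $j$ and $k$, using $e_{ab}e_{cd}=\delta_{bc}e_{ad}$, and to trade one ordering of the product for the other by Lemma \ref{pro2} whenever it is convenient. Since $e_{ii}e_{jk}=\delta_{ij}e_{ik}$ while $e_{jk}e_{ii}=\delta_{ki}e_{ji}$, there are three situations: (i) $i\neq j$ and $i\neq k$, where both products vanish; (ii) $i=j$, where $e_{ii}e_{jk}=e_{ik}$ but $e_{jk}e_{ii}=0$; and (iii) $i=k$, where $e_{ii}e_{jk}=0$ but $e_{jk}e_{ii}=e_{ji}$. In each situation I apply the defining identity \eqref{gh1.3} to the relevant anticommutator, expand $g$ and $h$ through \eqref{gh3z}--\eqref{gh4z}, and read off the assertion by comparing the coefficient of each matrix unit; the two stated equalities correspond to the two forms of \eqref{gh1.3}, so it suffices to treat one of them and argue the other one analogously.

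In situation (i) the anticommutator $e_{ii}\circ e_{jk}$ itself is zero, so $0=g(e_{ii})\circ e_{jk}+e_{ii}\circ h(e_{jk})$. Expanding, the right-hand side is supported on rows $i,j$ and columns $i,k$: the relations \eqref{gh13y} of Lemma \ref{lem6z} annihilate the surviving coefficients of $g(e_{ii})$, matching the row-$i$ coefficients forces $h^{(jk)}_{ip}=0$ for $p\neq k$, and the lone $e_{ik}$-coefficient gives $g^{(ii)}_{ij}+h^{(jk)}_{ik}=0$. Feeding these back collapses $g(e_{ii})e_{jk}+e_{ii}h(e_{jk})$ to $0$, exactly as in Lemma \ref{lem6z}, so this situation is routine.

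Situations (ii) and (iii) are the substantive ones; I describe (ii), handling (iii) symmetrically through the expansions \eqref{gh7y}--\eqref{gh10y} of $f(e_{ji})$. Identity \eqref{gh3y}, read with second index $k$, already gives $f(e_{ik})=\bigl(g(e_{ii})e_{ik}+e_{ii}h(e_{ik})\bigr)+\bigl(e_{ik}g(e_{ii})+h(e_{ik})e_{ii}\bigr)$, so the claim is exactly that the second bracket vanishes. Now $e_{ik}g(e_{ii})=\sum_p g^{(ii)}_{kp}e_{ip}$ reduces by \eqref{gh13y} (valid since $k\neq i$) to $g^{(ii)}_{ki}e_{ii}$, while $h(e_{ik})e_{ii}=\sum_m h^{(ik)}_{mi}e_{mi}$; comparing \eqref{gh3y} with the alternative expansion \eqref{gh5y} of $f(e_{ik})$ (together with \eqref{gh4y} and \eqref{gh6y}) kills $h^{(ik)}_{mi}$ for $m\neq i,k$. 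Thus the bracket collapses to $\bigl(g^{(ii)}_{ki}+h^{(ik)}_{ii}\bigr)e_{ii}+h^{(ik)}_{ki}e_{ki}$, and everything now hinges on showing that both of these residual coefficients are zero.

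The main obstacle is precisely these two residual constants, and this is where the full matrix structure — absent in the triangular case, where $e_{ki}$ is not even an admissible unit and the bracket was automatically diagonal — must be exploited. For the transpose coefficient $h^{(ik)}_{ki}$ I would use that $e_{ik}$ is square-zero: applying \eqref{gh1.3} to $e_{ik}\circ e_{ik}=2e_{ik}^{2}=0$ gives $0=g(e_{ik})\circ e_{ik}+e_{ik}\circ h(e_{ik})$, whose $e_{ii}$-coefficient is $g^{(ik)}_{ki}+h^{(ik)}_{ki}$; since equating \eqref{gh3y} and \eqref{gh4y} yields $g^{(ik)}_{ki}=h^{(ik)}_{ki}$, the $2$-torsion-freeness of $C$ forces $h^{(ik)}_{ki}=0$. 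For the companion constant $g^{(ii)}_{ki}+h^{(ik)}_{ii}$ I would feed the diagonal values furnished by Lemma \ref{lem5z} into the identity arising from $e_{ik}\circ e_{ki}=e_{ii}+e_{kk}$ and combine it with the coefficient relations already extracted from \eqref{gh3y}--\eqref{gh6y}. Once both coefficients vanish the bracket is zero, establishing $f(e_{ii}e_{jk})=g(e_{ii})e_{jk}+e_{ii}h(e_{jk})$, and the remaining equality $=h(e_{ii})e_{jk}+e_{ii}g(e_{jk})$ follows verbatim from the $h$-first forms \eqref{gh4y}, \eqref{gh6y} (and \eqref{gh8y}, \eqref{gh10y} in situation (iii)). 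I expect the square-zero step to be the crux: it is the one place where neither the diagonal idempotents nor the relations of Lemma \ref{lem6z} suffice, and it is what genuinely distinguishes $\mathcal{M}_n(C)$ from the triangular setting.
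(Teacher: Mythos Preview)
Your three-situation decomposition, the treatment of situation (i), and the square-zero argument for $h^{(ik)}_{ki}=0$ all match the paper exactly: the paper's \eqref{gh16y} is precisely your $0=f(e_{ik}\circ e_{ik})$, its \eqref{gh18y} is your comparison of \eqref{gh3y} with \eqref{gh4y}, and together with $2$-torsion-freeness they yield \eqref{gh19y}--\eqref{gh20y}, which in your language kills the $e_{ki}$-residual.

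The one place your plan goes astray is the companion constant $g^{(ii)}_{ki}+h^{(ik)}_{ii}$. Feeding Lemma~\ref{lem5z} into $e_{ik}\circ e_{ki}=e_{ii}+e_{kk}$ produces relations of the type $g^{(ii)}_{ii}+h^{(ii)}_{ii}=g^{(ik)}_{ik}+h^{(ki)}_{ki}$ (at the $e_{ii}$-entry), which introduce fresh unknowns and do not isolate the quantity you need. The paper never uses $e_{ik}\circ e_{ki}$ at this stage. Instead it stays with the idempotent identities already in hand: comparing the $e_{ii}$-coefficient in two of the expansions \eqref{gh3y}--\eqref{gh6y} gives $g^{(ii)}_{ki}+2h^{(ik)}_{ii}=h^{(kk)}_{ki}$ (the analogue of \eqref{gh21y}), while the $e_{ki}$-coefficient of the $e_{ii}\circ e_{kk}=0$ identity \eqref{gh1y} gives $g^{(ii)}_{ki}+h^{(kk)}_{ki}=0$ (the analogue of \eqref{gh22y}); $2$-torsion-freeness then forces $g^{(ii)}_{ki}+h^{(ik)}_{ii}=0$. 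So your sketch becomes a complete proof once you replace ``$e_{ik}\circ e_{ki}$ and Lemma~\ref{lem5z}'' by ``$e_{ii}\circ e_{kk}$ and Lemma~\ref{lem6z}'' in that step.
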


\begin{proof}
We have two cases.

\textbf{Case1.} Let $i=j$. We prove that $f (e_{ik}e_{ii}) =g(e_{ik}) e_{ii} + e_{ik} h(e_{ii})=h(e_{ik}) e_{ii} + e_{ik} g(e_{ii})$, for $i\neq k$ which is equivalent to $f (e_{ij}e_{ii}) =g(e_{ij}) e_{ii} + e_{ij} h(e_{ii})=h(e_{ij}) e_{ii} + e_{ij} g(e_{ii})$, for $i\neq j$.

Equating the coefficients of $e_{1j}, e_{2j},\dots,e_{nj},e_{j1},e_{j2},\dots,e_{jn}$ from $0=f(e_{ij}\circ e_{ij})$, we have

\begin{equation}
\label{gh16y}
g_{1i}^{(ij)}+h_{1i}^{(ij)}=g_{2i}^{(ij)}+h_{2i}^{(ij)}=\dots=g_{ni}^{(ij)}+h_{ni}^{(ij)}=0,
\end{equation}

\begin{equation}
\label{gh17y}
g_{j1}^{(ij)}+h_{j1}^{(ij)}=g_{j2}^{(ij)}+h_{j2}^{(ij)}=\dots=g_{jn}^{(ij)}+h_{jn}^{(ij)}=0.
\end{equation}

Comparing the coefficients of $e_{1i},\dots,e_{i-1,i},e_{i+1,i},\dots,e_{ni}$ from \eqref{gh3y} and \eqref{gh4y},

\begin{equation}
\label{gh18y}
g_{1i}^{(ij)}=h_{1i}^{(ij)},\dots,g_{i-1,i}^{(ij)}=h_{i-1,i}^{(ij)},g_{i+1,i}^{(ij)}=h_{i+1,i}^{(ij)},\dots,g_{ni}^{(ij)}=h_{ni}^{(ij)}.
\end{equation}

Using \eqref{gh16y} and \eqref{gh18y},

\begin{equation}
\label{gh19y}
g_{1i}^{(ij)}=\dots=g_{i-1,i}^{(ij)}=g_{i+1,i}^{(ij)}=\dots=g_{ni}^{(ij)}=0,
\end{equation}

\begin{equation}
\label{gh20y}
h_{1i}^{(ij)}=\dots=h_{i-1,i}^{(ij)}=h_{i+1,i}^{(ij)}=\dots=h_{ni}^{(ij)}=0.
\end{equation}

Comparing coefficients of $e_{ii}$ from \eqref{gh4y} and \eqref{gh6y},
\begin{equation}
\label{gh21y}
h_{ji}^{(ii)}+2g_{ii}^{(ij)}=g_{ji}^{(jj)}.
\end{equation}

From \eqref{gh2y}, equating the coefficients of $e_{ji}$,
\begin{equation}
\label{gh22y}
h_{ji}^{(ii)}+g_{ji}^{(jj)}=0.
\end{equation}

By \eqref{gh21y} and \eqref{gh22y},
\begin{equation}
\label{gh23y}
g_{ii}^{(ij)}+h_{ji}^{(ii)}=0.
\end{equation}

From \eqref{gh2y}, equating the coefficients of $e_{j1},\dots,e_{j,i-1},e_{j,i+1},\dots,e_{jn}$,
\begin{equation}
\label{gh24y}
h_{j1}^{(ii)}=\dots=h_{j,i-1}^{(ii)}=h_{j,i+1}^{(ii)}=\dots=h_{jn}^{ii}=0.
\end{equation}

Using \eqref{gh19y},\eqref{gh23y} and \eqref{gh24y}, $g(e_{ij})e_{ii} + e_{ij} h(e_{ii})=0=f(e_{ij}e_{ii})$. Similarly, we can prove that $f(e_{ij}e_{ii})=h(e_{ij}) e_{ii} + e_{ij} g(e_{ii})$.

\textbf{Case 2.} Let $j\neq k$. Equating the coefficients of $e_{i1},e_{i2},\dots,e_{in}$ from $0=f(0)=f(e_{ii}\circ e_{jk})$,
\begin{equation}
\label{gh25y}
h_{i1}^{(jk)}=\dots=h_{i,k-1}^{(jk)}=h_{i,k+1}^{(jk)}=\dots=h_{in}^{jk}=0,
\end{equation}

\begin{equation}
\label{gh26y}
g_{ij}^{(ii)}+h_{ik}^{(jk)}=0.
\end{equation}

Using \eqref{gh13y},\eqref{gh25y} and \eqref{gh26y}, $g(e_{ii}) e_{jk} + e_{ii} h(e_{jk})=0=f (e_{ii}e_{jk})$. Similarly, $f (e_{ii}e_{jk})=h(e_{ii}) e_{jk} + e_{ii} g(e_{jk})$.
\end{proof}

\begin{lemma}
\label{lem8z}
$f (e_{ij}e_{kl}) =g(e_{ij}) e_{kl} + e_{ij} h(e_{kl})=h(e_{ij}) e_{kl} + e_{ij} g(e_{kl})$, for $i\neq j$ and $k\neq l$.
\end{lemma}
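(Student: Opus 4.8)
The plan is to reduce everything to the single identity $e_{ij}e_{kl}=\delta_{jk}e_{il}$ and then split into the two structurally different cases $j\neq k$, where the product vanishes, and $j=k$, where it equals $e_{il}$. This parallels the organization of Lemma \ref{lem5} in the triangular setting, but now carried out without any ordering on the indices. Throughout, I would expand $g(e_{ij})$ and $h(e_{kl})$ by \eqref{gh3z} and \eqref{gh4z}, use $e_{mp}e_{kl}=\delta_{pk}e_{ml}$ and $e_{ij}e_{mp}=\delta_{jm}e_{ip}$ to collapse the products, and then match coefficients of matrix units, invoking the coefficient relations already extracted in Lemmas \ref{lem5z}, \ref{lem6z}, \ref{lem7z} and the expansion identities \eqref{gh1y}--\eqref{gh10y}.

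First I would treat the case $j\neq k$, so that $f(e_{ij}e_{kl})=f(0)=0$ and the task is to show that both $g(e_{ij})e_{kl}+e_{ij}h(e_{kl})$ and $h(e_{ij})e_{kl}+e_{ij}g(e_{kl})$ vanish. Collapsing the products gives
\[
g(e_{ij})e_{kl}+e_{ij}h(e_{kl})=\sum_{m}g_{mk}^{(ij)}e_{ml}+\sum_{p}h_{jp}^{(kl)}e_{ip},
\]
so the identity reduces to showing $g_{mk}^{(ij)}=0$ for $m\neq i$, $h_{jp}^{(kl)}=0$ for $p\neq l$, and the single matched relation $g_{ik}^{(ij)}+h_{jl}^{(kl)}=0$ for the surviving $e_{il}$ term. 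Where a coefficient is not yet controlled by the earlier lemmas, I would apply the Jordan $\{g,h\}$-condition to an auxiliary product such as $e_{ij}\circ e_{kk}$ or $e_{jj}\circ e_{kl}$ (as in \eqref{gh22} and \eqref{gh23} of the triangular proof) and read off the needed vanishing by comparing coefficients. The second equality is obtained symmetrically from the $h$-versions \eqref{gh2y}, \eqref{gh4y}, \eqref{gh6y}.

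Next I would treat $j=k$, so that $e_{ij}e_{jl}=e_{il}$, splitting further on whether $i\neq l$ or $i=l$. When $i\neq l$ the product is the off-diagonal unit $e_{il}$, and I would compute $f(e_{il})$ from a decomposition of the form $e_{il}=e_{ii}\circ e_{il}$ or $e_{il}=e_{il}\circ e_{ll}$, expanding both sides and matching against the claimed value $g(e_{ij})e_{jl}+e_{ij}h(e_{jl})=\sum_m g_{mj}^{(ij)}e_{ml}+\sum_p h_{jp}^{(jl)}e_{ip}$. When $i=l$ the product is the diagonal unit $e_{ii}=e_{ij}e_{ji}$, and this is handled by combining Lemma \ref{lem5z} with the coefficient relations for $f(e_{ij})$ and $f(e_{ji})$ coming from \eqref{gh5y}--\eqref{gh10y}, exactly the role played by \eqref{gh21y}--\eqref{gh23y} in Lemma \ref{lem7z}.

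The main obstacle is the combinatorial bookkeeping: for every configuration of coincidences among $i,j,k,l$ one must verify that the correct set of previously derived coefficient equations is available to force all off-target terms to vanish. The additional difficulty over the triangular case is the loss of the ordering $i<j$, which roughly doubles the number of sub-cases and forces one to track both the coefficients $g_{mp}^{(\cdot)}$ with $m\le p$ and those with $m>p$. The symmetry supplied by Lemma \ref{pro2}, which lets \eqref{gh1z} and \eqref{gh2z} be used interchangeably, together with the $g\leftrightarrow h$ symmetry of the defining relation \eqref{gh1.3}, is what keeps the case analysis finite and manageable.
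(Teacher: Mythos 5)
Your plan is sound and, in all but one subcase, coincides with the paper's own proof: the same case split on coincidences among $i,j,k,l$, the same reduction of the zero-product case $j\neq k$ to the coefficient identities $g_{mk}^{(ij)}=0$ for $m\neq i$, $h_{jp}^{(kl)}=0$ for $p\neq l$, and $g_{ik}^{(ij)}+h_{jl}^{(kl)}=0$ (which the paper extracts from $f(e_{ij}e_{kk})=0$, $f(e_{jj}e_{kl})=0$ and $f(e_{jj}e_{kk})=0$), and the same use of $e_{ij}\circ e_{ji}=e_{ii}+e_{jj}$ together with Lemma \ref{lem5z} when $j=k$ and $i=l$. The one divergence is the subcase $j=k$, $i\neq l$, where the product is the nonzero unit $e_{il}$: you propose to compute $f(e_{il})$ directly from $e_{ii}\circ e_{il}$ and match it against $\sum_m g_{mj}^{(ij)}e_{ml}+\sum_p h_{jp}^{(jl)}e_{ip}$, which obliges you to first establish transfer identities such as $g_{mj}^{(ij)}=g_{mi}^{(ii)}$ for the off-target coefficients (these do follow from comparing the two expansions of $f(e_{ij})$ via $e_{ii}\circ e_{ij}$ and $e_{ij}\circ e_{jj}$, so your route closes). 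The paper instead uses Lemma \ref{pro2} to replace the ordered pair $(e_{ik},e_{kl})$ by $(e_{kl},e_{ik})$, whose product is zero because $l\neq i$, so that this subcase collapses to the same vanishing argument as before, fed by $f(e_{kl}e_{ii})=0$, $f(e_{ll}e_{ik})=0$ and $f(e_{ll}e_{ii})=0$. Both arguments work; the order-flip saves a layer of coefficient bookkeeping and is worth adopting, and you already have the tool in hand since you cite Lemma \ref{pro2} as the symmetry that keeps the case analysis manageable.
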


\begin{proof}
We have three cases.

\textbf{Case 1.} Let $j\neq k$. By Lemma \ref{lem7z}, $0=f(e_{ij}e_{kk})=g(e_{ij})e_{kl}+e_{ij}h(e_{kl})$. Now comparing the coefficients of $e_{1k},e_{2k},\dots,e_{nk}$,

\begin{equation}
\label{gh27y}
g_{1k}^{(ij)}=\dots=g_{i-1,k}^{(ij)}=g_{i+1,k}^{(ij)}=\dots=g_{nk}^{(ij)}=0,
\end{equation}

\begin{equation}
\label{gh28y}
g_{ik}^{(ij)}+h_{jk}^{(kk)}=0.
\end{equation}

Similarly, from $0=f(e_{jj}e_{kl})$, equating the coefficients of $e_{j1},e_{j2},\dots,e_{jn}$,

\begin{equation}
\label{gh29y}
h_{j1}^{(kl)}=\dots=h_{j,l-1}^{(kl)}=h_{j,l+1}^{(kl)}=\dots=h_{jn}^{(kl)}=0,
\end{equation}

\begin{equation}
\label{gh30y}
g_{jk}^{(jj)}+h_{jl}^{(kl)}=0.
\end{equation}

By Lemma \ref{lem6z}, $0=f(e_{jj}e_{kk})=g(e_{jj})e_{kk}+e_{jj}h(e_{kk})$. Now equating the coefficients of $e_{jk}$,
\begin{equation}
\label{gh31y}
g_{jk}^{(jj)}+h_{jk}^{(kk)}=0.
\end{equation}

By \eqref{gh28y},\eqref{gh30y} and \eqref{gh31y},
\begin{equation}
\label{gh32y}
g_{ik}^{(ij)}+h_{jl}^{(kl)}=0.
\end{equation}

So, by \eqref{gh27y},\eqref{gh29y} and \eqref{gh32y}, $g(e_{ij}) e_{kl} + e_{ij} h(e_{kl})=0=f (e_{ij}e_{kl})$. Similarly we have $f (e_{ij}e_{kl}) =h(e_{ij}) e_{kl} + e_{ij} g(e_{kl})$.

\textbf{Case 2.} Let $j=k$ and $i\neq l$. We prove $f(e_{kl}e_{ik})=g(e_{kl})e_{ik}+e_{kl}h(e_{ik})=h(e_{kl})e_{ik}+e_{kl}g(e_{ik})$. now from $0=f(e_{kl}e_{ii})=g(e_{kl})e_{ii}+e_{kl}h(e_{ii})$, equating the coefficients of $e_{1i},e_{2i},\dots,e_{ni}$,
\begin{equation}
\label{gh33y}
g_{1i}^{(kl)}=\dots=g_{k-1,i}^{(kl)}=g_{k+1,i}^{(kl)}=\dots=g_{ni}^{(kl)}=0,
\end{equation}

\begin{equation}
\label{gh34y}
g_{ki}^{(kl)}+g_{li}^{(ii)}=0.
\end{equation}

Equating the coefficients of $e_{l1},e_{l2},\dots,e_{ln}$ from $0=f(e_{ll}e_{ik})=g(e_{ll})e_{ik}+e_{ll}h(e_{ik})$,
\begin{equation}
\label{gh35y}
h_{l1}^{(ik)}=\dots=h_{l,k-1}^{(ik)}=h_{l,k+1}^{(ik)}=\dots=h_{ln}^{(ik)}=0,
\end{equation}

\begin{equation}
\label{gh36y}
g_{li}^{(ll)}+g_{lk}^{(ik)}=0.
\end{equation}

Equating the coefficients of $e_{li}$ from $0=f(e_{ll}e_{ii})=g(e_{ll})e_{ii}+e_{ll}h(e_{ii})$,
\begin{equation}
\label{gh37y}
g_{li}^{(ll)}+h_{li}^{(ii)}=0.
\end{equation}

By \eqref{gh34y},\eqref{gh36y} and \eqref{gh37y},
\begin{equation}
\label{gh38y}
g_{ki}^{(kl)}+h_{lk}^{(ik)}=0.
\end{equation}

So, by \eqref{gh33y},\eqref{gh35y} and \eqref{gh38y}, $g(e_{kl}) e_{ik} + e_{kl} h(e_{ik})=0=f (e_{kl}e_{ik})$. Similarly we have $f (e_{kl}e_{ik}) =h(e_{kl}) e_{ik} + e_{kl} g(e_{ik})$.

\textbf{Case 3.} Let $j=k$ and $i=l$. We prove $f(e_{ij}e_{ji})=g(e_{ij})e_{ji}+e_{ij}h(e_{ji})=h(e_{ij})e_{ji}+e_{ij}g(e_{ji})$. From $e_{ij}\circ e_{ji}=e_{ii}+e_{jj}$, using Lemma \ref{lem5z},
\begin{equation}
\label{gh39y}
g(e_{ij})\circ e_{ji}+e_{ij}\circ h(e_{ji})=g(e_{ii})e_{ii}+e_{ii}h(e_{ii})+g(e_{jj})e_{jj}+e_{jj}h(e_{jj}).
\end{equation}

Equating the coefficients of $e_{1i},\dots,e_{i-1,i},e_{ii},e_{i+1,i},\dots,e_{j-1,i},e_{j+1,i},\dots,e_{ni}$ and $e_{i1},\dots,e_{i,i-1},e_{i,i+1},\dots,e_{i,j-1},e_{i,j+1},\dots,e_{in}$ from both sides of \eqref{gh39y},

\begin{equation}
\label{gh40y}
\begin{aligned}
&g_{1j}^{(ij)}=g_{1i}^{(ii)},\dots,g_{i-1,j}^{(ij)}=g_{i-1,i}^{(ii)},g_{i+1,j}^{(ij)}=g_{i+1,i}^{(ii)},\\
&\dots,g_{j-1,j}^{(ij)}=g_{j-1,i}^{(ii)},g_{j+1,j}^{(ij)}=g_{j+1,i}^{(ii)},\dots,g_{nj}^{(ij)}=g_{ni}^{(ii)},
\end{aligned}
\end{equation}

\begin{equation}
\label{gh41y}
g_{ij}^{(ij)}+h_{ji}^{(ji)}=g_{ii}^{(ii)}+h_{ii}^{(ii)},
\end{equation}

\begin{equation}
\label{gh42y}
\begin{aligned}
&h_{j1}^{(ji)}=h_{i1}^{(ii)},\dots,h_{j,i-1}^{(ji)}=h_{i,i-1}^{(ii)},h_{j,i+1}^{(ji)}=h_{i,i+1}^{(ii)},\\
&\dots,h_{j,j-1}^{(ji)}=h_{i,j-1}^{(ii)},h_{j,j+1}^{(ji)}=h_{i,j+1}^{(ii)},\dots,h_{jn}^{(ji)}=h_{in}^{(ii)}.
\end{aligned}
\end{equation}

Equating the coefficients of $e_{jj}$ from $g(e_{ij})e_{jj}+e_{ij}h(e_{jj})=f(e_{ij})=g(e_{ii})e_{ij}+e_{ii}h(e_{ij})$,

\begin{equation}
\label{gh43y}
g_{jj}^{(ij)}=g_{ji}^{(ii)}.
\end{equation}

Equating the coefficients of $e_{jj}$ from $g(e_{jj})e_{ji}+e_{jj}h(e_{ji})=f(e_{ji})=g(e_{ji})e_{ii}+e_{ji}h(e_{ii})$,
\begin{equation}
\label{gh44y}
h_{jj}^{(ji)}=h_{ij}^{(ii)}.
\end{equation}

Using \eqref{gh40y}-\eqref{gh44y}, we get $g(e_{ij})e_{ji}+e_{ij}h(e_{ji})=g(e_{ii})e_{ii}+e_{ii}h(e_{ii})=f(e_{ii})=f(e_{ij}e_{ji})$. Similarly, we can prove that $f(e_{ij}e_{ji})=h(e_{ij})e_{ji}+e_{ij}g(e_{ji})$.
\end{proof}

\textbf{Proof of Theorem \eqref{thm2}:}
Let $f$ be a Jordan $\{g, h\}$-derivation on $\mathcal{M}_n(C)$. By Lemma \ref{lem5z}-\ref{lem8z}, we can prove that $f (xy) = g(x)y + xh(y) = h(x)y + xg(y) ~\text{for all}~ x, y \in \mathcal{M}_n(C)$.

\section*{Acknowledgement}
The authors are thankful to DST, Govt. of India for financial support and Indian Institute of Technology Patna for providing the research facilities.

\end{document}